\documentclass[12pt]{amsart}

\usepackage[margin=1.3in, centering]{geometry}

\usepackage{enumerate}

\usepackage[utf8]{inputenc}
\usepackage{amssymb,amscd,amsmath,enumerate,amsthm}
\usepackage[all]{xy}
\usepackage[utf8]{inputenc}
\usepackage[english]{babel}
\usepackage{tikz}
\usetikzlibrary{positioning,chains,fit,shapes,calc}
\usepackage{mathtools}
\usepackage[colorlinks,linkcolor=blue]{hyperref}
\usepackage{ytableau}

\theoremstyle{plain}
\newtheorem{thm}{Theorem}[section]

\newtheorem{cor}[thm]{Corollary}

\newtheorem{prop}[thm]{Proposition}

\newtheorem{lem}[thm]{Lemma}

\theoremstyle{remark}
\newtheorem{remark}{\bf \quad \itshape  Remark}

\theoremstyle{plain}

\theoremstyle{definition}


\renewcommand{\bar}{\overline}

\newcommand{\C}{{\mathbb{C}}}

\newcommand{\Z}{{\mathbb{Z}}}

\newcommand{\GL}{\mathrm{GL}}

\newcommand{\Alt}{{\raise 2pt\hbox{$\scriptstyle\bigwedge$}}}

\definecolor{myblue}{RGB}{80,80,160}
\definecolor{mygreen}{RGB}{80,160,80}
\newdimen\nodeSize
\nodeSize=3mm
\newdimen\nodeDist
\nodeDist=7mm

\tikzset{
	position/.style args={#1:#2 from #3}{
		at=(#3.#1), anchor=#1+180, shift=(#1:#2)
	}
}

\allowdisplaybreaks

\title{Symmetric permutation invariants in some tensor products}

\author{Zhipeng Lu}

\address{Mathematisches Institut,
	Georg-August Universit\"{a}t G\"{o}ttingen,
	Bunsenstra{\ss}e 3-5,
	D-37073 G\"{o}ttingen,
	Germany}

\email{zhipeng.lu@uni-goettingen.de}

\keywords{Invariant theory of symmetric groups, tensor product}
\subjclass[2010]{20C30, 20C15, 05A19}

\date{}

\begin{document}
	
	\maketitle

	\begin{abstract}
		This is a note for constructing fundamental invariants and computing the Hilbert series of the invariant subalgebras of tensor products of polynomial rings under the action by a direct product of symmetric groups. Our computation relies on Schur functions bringing together several identities of combinatorial generating functions including that of plane partitions.
	\end{abstract}

	\section{Introduction}
	Let $K$ be any field of characteristic zero and $X$ an affine variety over $K$. Suppose $G$ is a linear algebraic group acting regularly on $X$. Then we can define an action of $G$ on the coordinate ring $K[X]$ via
	\[g\cdot f(x):=f(g^{-1}\cdot x),\  \forall g\in G, x\in X, f\in K[X].\] 
	Invariant theory generally concerns about the invariant subalgebra $K[X]^G$, consisting of functions of $K[X]$ invariant under the action of $G$. The most basic example for $X$ being a vector space $V\simeq K^n$ with the symmetric group $S_n$ permuting a canonical basis of $V$ is well furnished with the theory of symmetric functions. Generalized to $K[V_1\oplus\cdots\oplus V_k]^{S_{n_1}\times\cdots\times S_{n_k}}$ with $V_i$'s vectors spaces of dimension $n_i$ and $S_{n_i}$ acting on $V_i$, we have the following generalization of the fundamental theorem of symmetric polynomials (see Theorem 3.10.1 of Derksen-Kemper \cite{Derksen-Kemper}):
	for $R:=K[x_{ij}]_{1\leq i\leq k, 1\leq j\leq n_i}$ with $G=S_{n_1}\times\cdots\times S_{n_k}$ acting by $(\sigma_1,\dots,\sigma_k)\cdot x_{ij}=x_{i\sigma_i(j)}$, $R^G\simeq K[s_{ij}]_{1\leq i\leq k, 1\leq j\leq n_i}$, where
	\[s_{ij}:=\sum_{I\subset\{1,\dots, n_i\}, |I|=j}\prod_{l\in I}x_{il}.\]
	The result holds for field of any characteristics.
	
	In general, the invariant subalgebra of finite groups acting on polynomial rings is not a polynomial ring on any set of invariants, but a finite free module over some polynomial ring of homogeneous invariants. This is due to the Cohen-Macaulay property of finite group actions. The free module expression is usually called a \textit{Hironaka decomposition}, while a set of homogeneous parameters of the base polynomial ring are call \textit{primary invariants} and a basis of invariants of the free module over the polynomial ring are called \textit{secondary invariants}. It is of special significance in computational algebraic geometry to further find generic basis invariants of the free modules for permutation groups. 
	
	To prelude results in more details, let $G\leq S_n$ be a permutation group acting on $K[V]=K[x_1,\dots,x_n]$ by permuting indexes of the variables $x_i$ for $V$ the corresponding affine space. Then the invariant subalgebra $K[V]^G$ consisting of polynomials invariant under the action of $G$, is a free finite module over $K[V]^{S_n}$ which is a polynomial ring in the elementary symmetric polynomials. The remarkable work of Garsia-Stanton \cite{Garsia-Stanton} provides an explicit combinatorial construction of secondary invariants for a large class of important permutation groups including the Young subgroups. 
	
	In more generality, we may extend the underline polynomial ring to products of symmetric polynomial rings, i.e. $K[V_1\times\cdots\times V_k]^{S_{n_1}\times\cdots\times S_{n_k}}\simeq K[V_1]^{S_{n_1}}\otimes\cdots\otimes  K[V_k]^{S_{n_k}}$. More precisely, for any permutation subgroup $G\leq S_{n_1}\times\cdots\times S_{n_k}$, we study the Hironaka decomposition of $K[V_1\times\cdots\times V_{n_k}]^{G}$. Then we can not choose the elementary symmetric polynomials of $K[V_1\times\cdots\times V_{n_k}]$ as primary invariants, but rather the collection of separated elementary symmetric functions of $K[V_i]$'s. This is accounted in the difference of denominators of in their Hilbert-Molien series. 
	
	In this paper, we focus on the basic case where $S_n\simeq G\leq S_n\times S_n$ acting on $K[V\times V]$ diagonally, i.e. $\sigma\cdot x_i\otimes x_j=x_{\sigma(i)}\otimes x_{\sigma(j)}$ for any $\sigma\in S_n$ in tensor notation. In this case, constructing fundamental invariants that generate the invariant subalgebra follows from standard analogue of Newton's identities. Then to decode the information about secondary invariants, we compute its Hilbert series. Finding a full set of explicit secondary invariants turns out to be much more difficult and will be left for future studies. 
	
	\section{A standard system of fundamental invariants}
    We work with a more general setting than preluded in the introduction as follows:
	
	to make notations shorter, let $\Gamma:=\Gamma_1\times\cdots\times\Gamma_k$ be a Cartesian product of finite sets $\Gamma_i$ with $|\Gamma_i|=n_i$, and let $S_{n_i}$ be the symmetric group on $\Gamma_i$. Then $S_\Gamma=S_{n_1}\times\cdots\times S_{n_k}$ can be seen as the symmetric group on $\Gamma$. With a field $K$ of characteristic zero fixed, $K[V_\Gamma]=K[x_{il}]$ ($1\leq i\leq k, 1\leq l\leq n_i$) is short for $K[x_{11},\dots,x_{1n_1},\dots,x_{k1},\dots,x_{kn_k}]$. Set $V_\Gamma=V_1\times\cdots\times V_k$ with $V_i$ the standard permutation representation of $S_{n_i}$ by linearly extending its action on $\Gamma_i$. Setting $V_\Gamma=V_{1}\oplus\cdots\oplus V_{k}$ with a vector space structure would not make any difference on the coordinate ring, hence we do not distinguish the two products. $S_\Gamma$ acts on $K[x_{il}]\otimes K[x_{il}]$ via $\sigma\cdot1\otimes x_{il}=1\otimes x_{i\sigma_i(i)}$,  $\sigma\cdot x_{il}\otimes1=x_{i\sigma_i(i)}\otimes1$; $(K[V_\Gamma]\otimes K[V_\Gamma])^{S_\Gamma}=\{f\in K[V_\Gamma]\otimes K[V_\Gamma]\mid \sigma\cdot f=f, \forall\sigma\in G\}$. The tensor product $\otimes$ always denotes $\otimes_K$. We may equate $K[V_\Gamma]\otimes K[V_\Gamma]$ with $K[x_{il}][y_{il}]$ by identifying $x_{il}\otimes 1$ with $x_{il}$ and $1\otimes x_{il}$ with $y_{il}$, but we will stick to the tensor product notations.
	
	We start with introducing the basic structural theorem of invariant subalgebras for finite groups as follows.
	\begin{prop}[Hochster-Eagon \cite{Hochster-Eagon}]\label{prop-Hochster-Eagon}
	Let $G$ be any finite group acting on an affine space $X$ over a field $K$. If the characteristic of $K$ does not divide the group order $|G|$, then $K[X]^G$ is Cohen-Macaulay.		
	\end{prop}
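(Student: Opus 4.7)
The plan is to exhibit $K[X]^G$ as a direct summand of the polynomial ring $K[X]$ and then use the well-known fact that Cohen-Macaulayness descends along such an $R^G$-module retraction when the extension is module-finite. Since $X$ is affine space, $K[X]$ is a polynomial ring and is thus regular, in particular Cohen-Macaulay, so this furnishes exactly the setting we need.

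First I would construct the \emph{Reynolds operator} $\rho\colon K[X] \to K[X]^G$ by $\rho(f) = \tfrac{1}{|G|}\sum_{g\in G} g\cdot f$, for which the hypothesis $\mathrm{char}(K)\nmid|G|$ is used precisely to invert $|G|$. This $\rho$ is $K[X]^G$-linear and restricts to the identity on $K[X]^G$, so it splits the inclusion $K[X]^G \hookrightarrow K[X]$ as $K[X]^G$-modules. Next, each $f \in K[X]$ is a root of the monic polynomial $\prod_{g\in G}(T - g\cdot f) \in K[X]^G[T]$, so $K[X]$ is integral over $K[X]^G$, and since $K[X]$ is finitely generated over $K$, it is in fact module-finite over $K[X]^G$. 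By Hilbert's finiteness theorem $K[X]^G$ is a finitely generated graded $K$-algebra, so graded Noether normalization produces a homogeneous system of parameters $\theta_1,\dots,\theta_d$ for $K[X]^G$, where $d = \dim K[X]^G = \dim K[X]$ by the going-up property for integral extensions.

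The remaining step, which I expect to be the one real technical point, is transferring regularity from $K[X]$ back down to $K[X]^G$. Since $K[X]$ is finite over $K[X]^G$, the $\theta_i$ also form a homogeneous system of parameters in $K[X]$, and Cohen-Macaulayness of $K[X]$ forces them to be a regular sequence there. If now $x\in K[X]^G$ satisfies $\theta_i x \in (\theta_1,\dots,\theta_{i-1})K[X]^G$, then the same relation holds in $K[X]$, and regularity in $K[X]$ yields $x = \sum_{j<i}\theta_j u_j$ for some $u_j \in K[X]$. Applying $\rho$ and invoking its $K[X]^G$-linearity together with $\rho(x)=x$ gives $x = \sum_{j<i}\theta_j \rho(u_j) \in (\theta_1,\dots,\theta_{i-1})K[X]^G$. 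Hence $\theta_1,\dots,\theta_d$ is a regular sequence in $K[X]^G$, and as its length equals $\dim K[X]^G$, this witnesses that $K[X]^G$ is Cohen-Macaulay.
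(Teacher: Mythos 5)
Your proof is correct. Note, however, that the paper does not actually prove this proposition --- it is stated as a black box with a citation to Hochster--Eagon, so there is no in-paper argument to compare against. What you have written is the standard self-contained proof for finite groups: the Reynolds operator exhibits $K[X]^G$ as a $K[X]^G$-module direct summand of the Cohen--Macaulay ring $K[X]$, the extension is module-finite by integrality, and a homogeneous system of parameters for $K[X]^G$ is then a regular sequence on $K[X]$ whose regularity descends by applying $\rho$ to the coefficients. All the steps check out: the hypothesis $\mathrm{char}(K)\nmid |G|$ enters exactly where you say it does, the $\theta_i$ do remain a system of parameters upstairs because $K[X]/(\theta_1,\dots,\theta_d)K[X]$ is finite-dimensional over $K$, and the retraction argument $x=\sum_{j<i}\theta_j\rho(u_j)$ is the crux. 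Two minor points worth tightening: the module-finiteness of $K[X]$ over $K[X]^G$ is cleanest via the Artin--Tate/Noether argument (integrality alone plus finite generation of $K[X]$ over $K$, routed through the subalgebra generated by the coefficients of the integral equations), rather than asserting it directly; and one should say explicitly that for a finitely generated graded $K$-algebra with $R_0=K$, exhibiting one homogeneous system of parameters that is a regular sequence suffices for Cohen--Macaulayness (equivalently, freeness over $K[\theta_1,\dots,\theta_d]$), which is the graded criterion the paper itself uses when it invokes Hironaka decompositions. Your argument is in fact more elementary and more informative than the citation, since it also explains why the Hironaka decomposition over the primary invariants of Proposition \ref{prop-primary invariants} exists.
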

	This tells us that $K[X]^G$ is always a finite free module over some polynomial ring $K[f_1,\dots, f_r]$ (e.g. see Proposition 2.5.3 of Derksen-Kemper \cite{Derksen-Kemper}) for a \textit{ homogeneous system of parameters} $f_1,\dots, f_r\in K[X]^G$, which are homogeneous and algebraically independent. Denote by $F=K[f_1,\dots, f_r]$, then $K[X]^G=Fg_1+\cdots+Fg_s$ for some $g_1,\dots, g_s\in K[X]^G$ homogeneous. Such a structure is also called a \textit{Hironaka decomposition}. The homogeneous polynomials $f_1,\dots, f_r$ are called \textit{primary invariants} and $g_1, \dots, g_s$ are \textit{secondary invariants}. In the remains of the section, we explicitly give a set of fundamental invariants as generators, and a system of primary invariants for $K[V_\Gamma\times V_\Gamma]^{S_\Gamma}$. 
	
	\subsection{Orbit sums in invariant subalgebras}
	To construct a set of generators for $K[V_\Gamma\times V_\Gamma]^{S_\Gamma}$ as a $K$-algebra, we investigate orbit sums of linearly independent generators of $K[V_\Gamma\times V_\Gamma]$ analogous to the elementary symmetric functions. For permutation representations of finite groups as in our case, we are guaranteed to find orbit sum generators of small degrees, see Garsia-Stanton \cite{Garsia-Stanton} or Theorem 6.2.9 of L. Smith \cite{Larry Smith}.
	
	In general, for any finite group $G$ acting on a variety $X$, and $f\in K[X]$, the \textit{orbit sum} of $f$ over $G$ is defined as 
	\[O_G(f):=\sum_{g\in G}g\cdot f.\] 
	(Here the normalizer by $1/|G|$ is safely omitted since the field $K$ has characteristic zero.)
	Clearly any orbit sum belongs to $K[X]^G$ and $O_G(f_1+f_2)=O_G(f_1)+O_G(f_2), \forall f_1, f_2\in K[X]$. Hence to find generators of $K[X]^G$, it suffices to find generators for all orbit sums of monomials over $G$ in $K[X]$. In our case, we need to find generators for orbit sums of $\prod_{i=1}^k\prod_{l=1}^{n_i}x_{il}^{r_{il}}\otimes x_{il}^{s_{il}}$ for any $r_{il}, s_{il}\in\Z$. Note that the multiplication in tensor product of algebras operates as $(f_1\otimes f_2)(g_1\otimes g_2)=f_1g_1\otimes f_2g_2$. Since each $S_{n_i}$ in the direct product $S_\Gamma$ acts on the component $K[V_{n_i}]\otimes K[V_{n_i}]$ of the tensor product $K[V_\Gamma]\otimes K[V_\Gamma]=\otimes_{i=1}^k(K[V_{n_i}]\otimes K[V_{n_i}])$, we immediately have the following rule for separating orbit sums of monomials.
	\begin{lem}\label{lem-orbit sum of monomials separated}
		The orbit sum $O_{S_\Gamma}\left(\prod_{i=1}^k\prod_{l=1}^{n_i}x_{il}^{r_{il}}\otimes x_{il}^{s_{il}}\right)$ is the product of the orbit sums $O_{S_{n_i}}\left(\prod_{l=1}^{n_i}x_{il}^{r_{il}}\otimes x_{il}^{s_{il}}\right)$, multiplied by some constant.
	\end{lem}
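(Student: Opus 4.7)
The plan is to use the fact that the action of $S_\Gamma = S_{n_1} \times \cdots \times S_{n_k}$ on $K[V_\Gamma] \otimes K[V_\Gamma]$ respects the tensor factorization
\[
K[V_\Gamma] \otimes K[V_\Gamma] \;\cong\; \bigotimes_{i=1}^k \bigl(K[V_{n_i}] \otimes K[V_{n_i}]\bigr),
\]
in which each $S_{n_i}$ acts nontrivially only on the $i$-th outer tensor slot. Distributivity of tensor product over sums will then separate the orbit sum across $i$.

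First I would set $f_i := \prod_{l=1}^{n_i} x_{il}^{r_{il}} \otimes x_{il}^{s_{il}} \in K[V_{n_i}] \otimes K[V_{n_i}]$. Under the factorization displayed above (and the identification of multiplication of monomials in disjoint $x_{il}$-variables with the outer tensor product across $i$), the monomial in question corresponds to $\bigotimes_{i=1}^k f_i$. Since $\sigma = (\sigma_1, \ldots, \sigma_k) \in S_\Gamma$ acts on $\bigotimes_i f_i$ by $\bigotimes_i (\sigma_i \cdot f_i)$, the direct computation
\[
O_{S_\Gamma}\!\left(\bigotimes_{i=1}^k f_i\right)
= \sum_{(\sigma_1,\ldots,\sigma_k)\in S_\Gamma} \bigotimes_{i=1}^k \sigma_i \cdot f_i
= \bigotimes_{i=1}^k \left(\sum_{\sigma_i\in S_{n_i}} \sigma_i\cdot f_i\right)
= \bigotimes_{i=1}^k O_{S_{n_i}}(f_i)
\]
follows from iterating the bilinear identity $\sum_{j,j'} a_j \otimes b_{j'} = \bigl(\sum_j a_j\bigr) \otimes \bigl(\sum_{j'} b_{j'}\bigr)$.

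Translating back to the polynomial ring via the identification above, the outer tensor $\bigotimes_i$ becomes ordinary multiplication, yielding the desired equality up to the scalar constant mentioned in the statement. That scalar is the only subtle point: depending on how one reconciles the monomial factorization with the tensor decomposition (and in particular whether one absorbs stabilizer multiplicities into $O_{S_{n_i}}(\cdot)$ separately), a fixed integer factor may appear, but it depends only on the combinatorial shape of the exponent data $(r_{il}, s_{il})$ and not on $\sigma$. No obstacle is expected beyond this bookkeeping.
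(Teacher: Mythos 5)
Your proof is correct and follows exactly the route the paper intends: the paper states this lemma without a formal proof, justifying it only by the preceding remark that each $S_{n_i}$ acts on the corresponding factor of $K[V_\Gamma]\otimes K[V_\Gamma]=\otimes_{i=1}^k(K[V_{n_i}]\otimes K[V_{n_i}])$, which is precisely the factorization-and-distributivity argument you spell out. With the paper's definition $O_G(f)=\sum_{g\in G}g\cdot f$ your computation even shows the constant is $1$, so your bookkeeping caveat is harmless.
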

    Next we further simplify construction of generators by the following reduction.
    \begin{lem}\label{lem-orbit sum of single tensor}
    	The orbit sum $O_{S_{n_i}}\left(\prod_{l=1}^{n_i}x_{il}^{r_{il}}\otimes x_{il}^{s_{il}}\right)$ over $S_{n_i}$ can be generated by the orbit sums $O_{S_{n_i}}(x_{il}^{r}\otimes x_{il}^{s})$ ($r,s\in\Z$), for $i=1,\dots, k$ and any chosen $1\leq l\leq n_i$.    	
    \end{lem}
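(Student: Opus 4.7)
The plan is to mimic the classical Newton-style argument that power sums generate symmetric polynomials, in a bivariate setting. Fix $i$ and suppress it from notation; identify $K[V_{n_i}]\otimes K[V_{n_i}]$ with $K[x_1,\dots,x_n,y_1,\dots,y_n]$ via $x_l\otimes 1\mapsto x_l$ and $1\otimes x_l\mapsto y_l$, so that $S_n$ acts diagonally. Under this identification the orbit sum in the lemma becomes $\sum_{\sigma\in S_n}\prod_l x_{\sigma(l)}^{r_l}y_{\sigma(l)}^{s_l}$, which is a positive integer multiple of the bivariate augmented monomial symmetric function
\[
\tilde m_{\underline\lambda}\;:=\sum_{\substack{(l_1,\dots,l_k)\in\{1,\dots,n\}^k\\ \text{pairwise distinct}}}\;\prod_{j=1}^k x_{l_j}^{a_j}y_{l_j}^{b_j},
\]
where $\underline\lambda=((a_1,b_1),\dots,(a_k,b_k))$ lists the nonzero pairs among $\{(r_l,s_l)\}_{l=1}^n$. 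Likewise $O_{S_n}(x_l^r\otimes x_l^s)=(n-1)!\,p_{r,s}$ with $p_{r,s}:=\sum_l x_l^r y_l^s$ the bivariate power sum. So the lemma reduces to: every $\tilde m_{\underline\lambda}$ lies in the $K$-subalgebra generated by the $p_{r,s}$.

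I would then induct on $k$, the number of nonzero pairs. The cases $k=0$ (a constant) and $k=1$ ($\tilde m_{(a,b)}=p_{a,b}$) are immediate. For the inductive step, expand
\[
p_{a_1,b_1}\cdots p_{a_k,b_k}\;=\sum_{(l_1,\dots,l_k)\in\{1,\dots,n\}^k}\;\prod_{j=1}^k x_{l_j}^{a_j}y_{l_j}^{b_j}
\]
and group the right-hand side by the set partition $\pi$ of $\{1,\dots,k\}$ recording which of the $l_j$ coincide. A direct calculation shows the discrete partition contributes exactly $\tilde m_{\underline\lambda}$, while each coarser $\pi$ with blocks $\pi_1,\dots,\pi_r$ ($r<k$) contributes $\tilde m_{\underline\mu(\pi)}$, where $\underline\mu(\pi)$ is obtained from $\underline\lambda$ by replacing the pairs in each block $\pi_t$ by their componentwise sum $\bigl(\sum_{j\in\pi_t}a_j,\sum_{j\in\pi_t}b_j\bigr)$. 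Because all $(a_j,b_j)\neq(0,0)$ have nonnegative entries, no collapsed pair can equal $(0,0)$, so $|\underline\mu(\pi)|=r<k$, and the inductive hypothesis expresses each $\tilde m_{\underline\mu(\pi)}$ as a polynomial in the $p_{r,s}$. Solving
\[
\tilde m_{\underline\lambda}\;=\;p_{a_1,b_1}\cdots p_{a_k,b_k}\;-\sum_{\pi\text{ non-discrete}}\tilde m_{\underline\mu(\pi)}
\]
closes the induction, and multiplying by the appropriate integer constant returns the original orbit sum.

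The hard part is purely combinatorial bookkeeping: verifying that the discrete partition yields $\tilde m_{\underline\lambda}$ with coefficient exactly one (so that literal subtraction suffices, with no division through a factorial needed) and that every non-discrete partition strictly reduces the pair count. Once those are checked, the proof is a clean bivariate transcription of the classical fact that $p_1,p_2,\dots$ generate the ring of symmetric polynomials in characteristic zero, and no deeper obstacle appears.
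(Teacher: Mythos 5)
Your proof is correct and follows essentially the same route as the paper's: a Newton-style induction on the number of distinct tensor factors, using the fact that a product of the single-variable orbit sums $O_{S_{n_i}}(x_{il}^{r}\otimes x_{il}^{s})$ expands as the target orbit sum plus orbit sums with strictly fewer factors obtained by merging exponents. The only cosmetic difference is that you expand the full product $p_{a_1,b_1}\cdots p_{a_k,b_k}$ at once and organize the lower-order terms by set partitions, whereas the paper peels off one factor per inductive step; the underlying identity and induction are the same.
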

    \begin{proof}
    	In the base mixed case ($l_1\neq l_2$), we have \begin{align*}
    	&\dfrac{1}{n_i(n_i-1)}O_{S_{n_i}}\left(x_{il_1}^{r_{1}}\otimes x_{il_1}^{s_{1}}\cdot x_{il_2}^{r_{2}}\otimes x_{il_2}^{s_{2}}\right)=\sum_{\sigma\in S_{n_i}}(x_{i\sigma(l_1)}^{r_1}\otimes x_{i\sigma(l_1)}^{s_1})(x_{i\sigma(l_2)}^{r_2}\otimes x_{i\sigma(l_2)}^{s_2})\\
    	=&\sum_{1\leq m_1\neq m_2\leq n_i}(x_{im_1}^{r_1}\otimes x_{im_1}^{s_1})(x_{im_2}^{r_2}\otimes x_{im_2}^{s_2})\\
    	=&\left(\sum_{1\leq l\leq n_i}x_{il}^{r_1}\otimes x_{il}^{s_1}\right)\left(\sum_{1\leq l\leq n_i}x_{il}^{r_2}\otimes x_{il}^{s_2}\right)-\sum_{1\leq l\leq n_i}x_{il}^{r_1+r_2}\otimes x_{il}^{s_1+s_2}\\
    	=&O_{S_{n_i}}\left(x_{il}^{r_{1}}\otimes x_{il}^{s_{1}}\right)O_{S_{n_i}}\left(x_{il}^{r_{2}}\otimes x_{il}^{s_{2}}\right)-O_{S_{n_i}}\left(x_{il}^{r_{1}+r_2}\otimes x_{il}^{s_{1}+s_2}\right).
    	\end{align*}
    	In general, we have
    	\begin{align*}&O_{S_{n_i}}\left(\prod_{l=1}^{n_i-1}x_{il}^{r_{il}}\otimes x_{il}^{s_{il}}\right)O_{S_{n_i}}\left(x_{in_i}^{r_{in_i}}\otimes x_{in_i}^{s_{in_i}}\right)\\
    	=&c_1\sum_{j=1}^{n_i-1}O_{S_{n_i}}\left(x_{ij}^{r_{ij}+r_{in_i}}\otimes x_{ij}^{s_{ij}+s_{in_i}}\prod_{l\neq n_i,j}x_{il}^{r_{il}}\otimes x_{il}^{s_{il}}\right)+c_2O_{S_{n_i}}\left(\prod_{l=1}^{n_i}x_{il}^{r_{il}}\otimes x_{il}^{s_{il}}\right),
    	\end{align*} 	
    	for some nonzero integer constants $c_1,c_2\in K$. Hence the lemma follows from induction on the number of mixed tensor factors in the orbit sum.
    \end{proof}
    The above results only provide an infinite system of generators. We need to further bound the degree of the generating orbit sums. Note that $\deg(x_{il}^r\otimes x_{jm}^s)=r+s$. As illustration, we demonstrate how symmetric polynomials of $K[x_1,\dots,x_n]$ can be generated by the power sums $s_i=x_1^i+\cdots+x_n^i, i\leq n$. In addition, denote by $e_i=\sum_{I\subset\{1,\dots,n\}, |I|=i}\prod_{j\in I}x_j$ the elementary symmetric polynomials which can be as polynomials in $s_i$'s by Newton's identities. To show that $s_{n+1}$ can be generated per se, we compute
    \begin{align*}s_{n+1}=s_1s_n-\sum_{1\leq i\neq j\leq n}x_ix_j^{n}=&s_1s_n-\left(\left(\sum_{1\leq i\neq j\leq n}x_ix_j\right)s_{n-1}-\sum_{1\leq i\neq j\neq k\leq n}x_ix_jx_k^{n-1}\right)\\
    =&\cdots\\
    =&F(s_1,\dots,s_n)+(-1)^{n-1}\sum_{1\leq i_1\neq\cdots\neq i_n\leq n}x_{i_1}\cdots x_{i_{n-1}}x_{i_n}^2\\
    =&F(s_1,\dots,s_n)+(-1)^{n-1}e_ns_1,
    \end{align*}
    where $F(s_1,\dots,s_n)$ is a polynomial in $s_i$'s. Note that the algebraic dependence emerges at the last step due to the number of variables. This shows that $s_{n+1}$ is generated by $s_1,\dots, s_n$ and is actually an expression of Newton's identities. We extend its use to tensor products. First, parallel to symmetric polynomials we have
    \begin{lem}\label{lem-constant on left or right}
    	The orbit sums $L_{im}:=O_{S_{n_i}}(x_{il}^{m}\otimes 1)$ (for any chosen $1\leq l\leq n_i$) are generated by $L_{im}$ with $m\leq n_i$. Similarly, $R_{im}:=O_{S_{n_i}}(1\otimes x_{il}^{m})$ are generated by $R_{im}$ with $m\leq n_i$. In particular, the $2(n_1+\cdots+n_k)$ orbit sums $L_{im}, R_{im}, i=1,\dots, k, 1\leq m\leq n_i$ are algebraically independent in $K[V_\Gamma\times V_\Gamma]^{S_\Gamma}$.
    \end{lem}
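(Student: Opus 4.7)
The plan is to recognize that $L_{im}$ and $R_{im}$ are, up to a nonzero scalar, classical power sums living in two variable-disjoint blocks of the ambient polynomial ring, after which everything reduces to the classical theory of power sums in $n$ variables.

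First I would write out $L_{im}$ explicitly. Since $S_{n_i}$ acts trivially on the right tensor factor $1$, and since for each $j\in\{1,\ldots,n_i\}$ there are exactly $(n_i-1)!$ permutations $\sigma\in S_{n_i}$ with $\sigma(l)=j$, a direct count gives
\[
L_{im} \;=\; \sum_{\sigma\in S_{n_i}} x_{i\sigma(l)}^{m}\otimes 1 \;=\; (n_i-1)!\,\Bigl(\sum_{j=1}^{n_i}x_{ij}^{m}\Bigr)\otimes 1 \;=\; (n_i-1)!\,p_m^{(i)}\otimes 1,
\]
where $p_m^{(i)}:=x_{i1}^{m}+\cdots+x_{in_i}^{m}$ is the $m$-th power sum in the $i$-th block of $x$-variables. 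In particular, the value is independent of the choice of $l$, justifying the notation. The analogous computation yields $R_{im}=(n_i-1)!\,1\otimes p_m^{(i)}$.

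Next, to establish that $L_{im}$ with $m>n_i$ is generated by $L_{i1},\ldots,L_{in_i}$, I would simply quote the Newton-type computation already displayed in the paragraph immediately preceding the lemma, which expresses $p_m^{(i)}$ as a polynomial in $p_1^{(i)},\ldots,p_{n_i}^{(i)}$. Dividing by the appropriate factorials and translating back through the identity above produces the required polynomial expression of $L_{im}$ in $L_{i1},\ldots,L_{in_i}$. The argument for the $R_{im}$ is identical by left-right symmetry.

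For the algebraic independence assertion I would invoke the concrete identification $K[V_\Gamma]\otimes K[V_\Gamma]\simeq K[x_{il},y_{il}]$ (with $y_{il}:=1\otimes x_{il}$) mentioned at the start of the section. Under this identification $L_{im}\in K[x_{i1},\ldots,x_{in_i}]$ and $R_{im}\in K[y_{i1},\ldots,y_{in_i}]$, and the resulting $2k$ polynomial subrings involve pairwise disjoint sets of variables. Hence a polynomial relation among all the $L_{im},R_{im}$ must split into separate relations inside each block, so it suffices to check that within a single block the $n_i$ polynomials $p_1^{(i)},\ldots,p_{n_i}^{(i)}$ are algebraically independent in $K[x_{i1},\ldots,x_{in_i}]$; but this is the classical fact that the first $n$ power sums form a transcendence basis for the symmetric polynomials in $n$ variables over a field of characteristic zero.

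There is no substantive obstacle here: the entire content of the lemma is the observation that the trivial action on the unused tensor factor reduces $L_{im}$ and $R_{im}$ to power sums supported in variable-disjoint blocks, after which the fundamental theorem of symmetric polynomials, applied separately to each block, finishes both assertions.
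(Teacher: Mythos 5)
Your proof is correct and follows essentially the route the paper intends: the paper states this lemma without a separate proof, relying on the immediately preceding Newton's-identities computation for power sums, and your observation that $L_{im}=(n_i-1)!\,p_m^{(i)}\otimes 1$ (and symmetrically for $R_{im}$) is exactly the reduction that makes that discussion apply. The algebraic-independence argument via disjoint variable blocks and the classical independence of the first $n$ power sums in characteristic zero is likewise the standard justification the paper leaves implicit.
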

    Now we verify the following algebraic dependence for mixed tensor products. 
    \begin{lem}\label{lem-degree n+1}
    	For any $r,s\in\Z$ with $r+s> n_i$, the orbit sum $O_{S_{n_i}}\left(x_{l}^{r}\otimes x_{l}^{s}\right)$ ($r,s\in\Z$) is generated by those of degree $<r+s$, for each $i=1,\dots,k$ and any chosen $1\leq l\leq n_i$. 
    \end{lem}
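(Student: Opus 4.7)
The plan is to interpret the iterated Newton-identity calculation of the preceding example as a single instance of Möbius inversion on the set-partition lattice, and to apply it to a ``fully spread'' distinct-index sum that is forced to vanish when $r+s>n_i$. Fix $i$, set $n:=n_i$, and drop the $i$-subscript throughout. For brevity set $P_{r,s} := \sum_{l=1}^{n} x_l^r \otimes x_l^s$, which up to the scalar $(n-1)!$ coincides with $O_{S_n}(x_l^r \otimes x_l^s)$. Put $p:=r+s$ and choose bi-exponents $(r_1,s_1),\ldots,(r_p,s_p)$ consisting of $r$ copies of $(1,0)$ and $s$ copies of $(0,1)$, so that $\sum_j r_j = r$ and $\sum_j s_j = s$.

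For each set partition $\sigma = \{B_1,\ldots,B_k\}$ of $[p]:=\{1,\ldots,p\}$ write $r_B := \sum_{j\in B} r_j$, $s_B := \sum_{j\in B} s_j$, and define
\begin{equation*}
T_\sigma \;:=\; \sum_{\substack{m_1,\ldots,m_k \in \{1,\ldots,n\} \\ \text{pairwise distinct}}} \;\prod_{b=1}^{k} x_{m_b}^{r_{B_b}} \otimes x_{m_b}^{s_{B_b}},
\end{equation*}
with $T_\sigma := 0$ when $k>n$. Each $T_\sigma$ is $S_n$-invariant (a nonzero rational multiple of an orbit sum). Expanding the product $\prod_{B\in\sigma} P_{r_B,s_B}$ and regrouping the resulting summation by the collision pattern of the summation indices yields the identity
\begin{equation*}
\prod_{B \in \sigma} P_{r_B, s_B} \;=\; \sum_{\pi \geq \sigma} T_\pi,
\end{equation*}
valid for every $\sigma$, where the sum runs over partitions $\pi$ coarsening $\sigma$. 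Möbius inversion on the partition lattice, with $\mu(\hat 0, \sigma) = (-1)^{p-|\sigma|}\prod_{B\in\sigma}(|B|-1)!$, then gives
\begin{equation*}
T_{\hat 0} \;=\; \sum_{\sigma} \; (-1)^{p-|\sigma|} \prod_{B\in\sigma} (|B|-1)! \;\cdot\; \prod_{B\in\sigma} P_{r_B,s_B},
\end{equation*}
where $\hat 0$ is the partition of $[p]$ into singletons.

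Two observations conclude the proof. First, the summand $\sigma = \hat 1$ (a single block of size $p$) contributes $(-1)^{p-1}(p-1)!\cdot P_{r,s}$, whereas every $\sigma$ with $|\sigma|\ge 2$ is a product of $P_{r',s'}$'s with $r'+s' < r+s$, since no block can be all of $[p]$. Second, $T_{\hat 0}$ ranges over $p = r+s$ pairwise distinct indices in $\{1,\ldots,n\}$; since $p > n$ by hypothesis, no such tuple exists and $T_{\hat 0} = 0$. Substituting and solving for $P_{r,s}$, which is legitimate since $(p-1)!$ is invertible in the characteristic-zero field $K$, exhibits $P_{r,s}$ as a polynomial in $P_{r',s'}$'s of total degree strictly less than $r+s$, completing the proof.

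\textbf{Main obstacle.} The substantive content is the collision-pattern identity $\prod_{B\in\sigma} P_{r_B,s_B} = \sum_{\pi\ge\sigma} T_\pi$; once this is in hand, the rest is formal lattice combinatorics. The key conceptual input (versus the one-tensor-factor case in the displayed example above) is that the ``fully spread'' configuration mixes $(1,0)$- and $(0,1)$-pairs, giving exactly $r+s$ indices whose forced distinctness drives the vanishing. A direct iteration in the style of the example also works but runs into a termination issue, because splitting off a unit from one entry can bump up another entry of the same bi-degree; recasting the whole iteration as Möbius inversion sidesteps this bookkeeping entirely.
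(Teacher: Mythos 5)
Your proof is correct, but it takes a genuinely different route from the paper's. The paper argues by an iterative mixing procedure: it repeatedly multiplies the orbit sum by the unmixed invariants $L_{ir_j}$ and $R_{is_j}$, showing at each step (via an ad hoc equivalence $\sim$) that the original orbit sum agrees, modulo products of lower-degree invariants, with an orbit sum whose tensor factors are spread over one additional distinct index; since only $n_i$ indices are available while $r+s>n_i$, the spreading must terminate in a factorization through $O_{S_{n_i}}(x_{i1}\cdots x_{in_i})$ or through $R_{i(r+s-n_i)}$, exhibiting the desired decomposition. You instead package the entire iteration into a single closed-form identity: the collision-pattern expansion $\prod_{B\in\sigma}P_{r_B,s_B}=\sum_{\pi\ge\sigma}T_\pi$ is correct (grouping the summation indices of the expanded product according to which blocks of $\sigma$ receive equal indices produces exactly the coarsenings $\pi$, with the empty classes consistent with the convention $T_\pi=0$ for $|\pi|>n$), the M\"obius function of the partition lattice is quoted correctly, the coefficient $(-1)^{p-1}(p-1)!$ of $P_{r,s}$ coming from $\hat 1$ is invertible in characteristic zero, and $T_{\hat 0}=0$ precisely because $r+s>n_i$ forces more distinct indices than exist. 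Since $O_{S_{n_i}}(x_{il}^a\otimes x_{il}^b)=(n_i-1)!\,P_{a,b}$, expressing $P_{r,s}$ as a polynomial in the $P_{a,b}$ with $a+b<r+s$ is exactly the assertion of the lemma. Your approach is the tensor analogue of the Newton--Girard identities and buys several things the paper's does not: explicit integer coefficients, a uniform treatment of the unmixed case $r=0$ or $s=0$ rather than delegating it to Lemma \ref{lem-constant on left or right}, and the elimination of the termination bookkeeping inherent in the $\sim$-chain, which as you observe is the delicate point of a naive iteration.
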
 
    \begin{proof}
    In short, for any invariants $f$ and $g$, we use $f\sim g$ to denote for $f-cg$ belonging to $K[L_{im},R_{im}]h$ for some invariant polynomial $h$ of degree $<\deg(f)=\deg(g)$ for some constants $c\in K$.
    Lemma \ref{lem-constant on left or right} covers for the unmixed case, so we assume $r,s\in\Z_+$. For any $1\leq r_1\leq r$, we directly compute
    \[O_{S_{n_i}}\left(x_{il}^{r-r_1}\otimes x_{il}^{s}\right)L_{ir_1}=O_{S_{n_i}}\left(x_{il}^r\otimes x_{il}^{s}\right)+O_{S_{n_i}}\left(x_{il_1}^{r-r_1}x_{il_2}^{r_1}\otimes x_{il_1}^{s}\right),\]
    in which $1\leq l_1\neq l_2\leq n_i$.
    This shows that 
    \begin{equation}\label{equation-first mix}O_{S_{n_i}}\left(x_{il}^r\otimes x_{il}^{s}\right)\sim O_{S_{n_i}}\left(x_{il_1}^{r-r_1}x_{il_2}^{r_1}\otimes x_{il_1}^{s}\right).\end{equation}
    For any $1\leq r_2\leq r_1$, we further compute that
    \begin{align*}&O_{S_{n_i}}\left(x_{il_1}^{r-r_1-r_2}x_{il_2}^{r_1}\otimes x_{il_1}^{s}\right)L_{ir_2}\\
    =&O_{S_{n_i}}\left(x_{il_1}^{r-r_1}x_{il_2}^{r_1}\otimes x_{il_1}^{s}\right)+O_{S_{n_i}}\left(x_{il_1}^{r-r_1-r_2}x_{il_2}^{r_1+r_2}\otimes x_{il_1}^{s}\right)\\
    &+O_{S_{n_i}}\left(x_{il_1}^{r-r_1-r_2}x_{il_2}^{r_1}x_{il_3}^{r_2}\otimes x_{il_1}^{s}\right),
    \end{align*}
    where $1\leq l_1\neq l_2\neq l_3\leq n_i$.
    Together with (\ref{equation-first mix}), this shows that
    \[O_{S_{n_i}}\left(x_{il}^r\otimes x_{il}^{s}\right)\sim O_{S_{n_i}}\left(x_{il_1}^{r-r_1-r_2}x_{il_2}^{r_1}x_{il_3}^{r_2}\otimes x_{il_1}^{s}\right).\]
    Note that the right hand side further mixes the tensor product by adding an extra distinct variable. Proceeding with the same algorithm (by induction), we are able to entirely mix the left side of the tensor product and get
    \[O_{S_{n_i}}\left(x_{il}^r\otimes x_{il}^{s}\right)\sim O_{S_{n_i}}\left(x_{il_1}\cdots x_{il_r}\otimes x_{il_1}^{s}\right),\]
    for $1\leq l_1\neq\cdots\neq l_r\leq n_i$, if $r\leq n_i$. For $r\geq n_i$, we would have 
    \[O_{S_{n_i}}\left(x_{il}^r\otimes x_{il}^{s}\right)\sim O_{S_{n_i}}\left(x_{il_1}\cdots x_{il_{n-1}}x_{il_n}^{r-n+1}\otimes x_{il_1}^{s}\right)=O_{S_{n_i}}(x_{i1}\cdots x_{in_{i}})O_{S_{n_i}}(x_{il}^{r-n}\otimes x_{il}^s),\]
    which is already generated as wanted. 
    
    Now suppose $r<n_i$. To mix the right side of the tensor product, we need an intermediate mixture as follows. Based on (\ref{equation-first mix}) we have for any bi-partition $s_1+s_2=s$,
    \begin{align*}O_{S_{n_i}}\left(x_{il_1}^{r-r_1}x_{il_2}^{r_1}\otimes x_{il_1}^{s_1}\right)R_{is_2}=&O_{S_{n_i}}\left(x_{il_1}^{r-r_1}x_{il_2}^{r_1}\otimes x_{il_1}^{s}\right)+O_{S_{n_i}}\left(x_{il_1}^{r-r_1}x_{il_2}^{r_1}\otimes x_{il_1}^{s_1}x_{il_2}^{s_2}\right)\\
    &+O_{S_{n_i}}\left(x_{il_1}^{r-r_1}x_{il_2}^{r_1}\otimes x_{il_1}^{s_1}x_{il_3}^{s_2}\right),
    \end{align*}
    for $1\leq l_1\neq l_2\neq l_3\leq n_i$. The latter two summands on the right are equivalent in the sense of $\sim$ by displaying the product $O_{S_{n_i}}\left(x_{il_1}^{r-r_1}x_{il_2}^{r_1}\otimes x_{il_1}^{s_1}\right)R_{is_2}$. Hence the above equality shows that
    \[O_{S_{n_i}}\left(x_{il}^r\otimes x_{il}^{s}\right)\sim O_{S_{n_i}}\left(x_{il_1}^{r-r_1}x_{il_2}^{r_1}\otimes x_{il_1}^{s}\right)\sim O_{S_{n_i}}\left(x_{il_1}^{r-r_1}x_{il_2}^{r_1}\otimes x_{il_1}^{s_1}x_{il_3}^{s_2}\right).\] 
    Continuing to utilize the strategy we see that for any partitions $r_1+\cdots+r_u=r$ and $s_1+\cdots+s_v=s$, 
    \[O_{S_{n_i}}\left(x_{il}^r\otimes x_{il}^{s}\right)\sim O_{S_{n_i}}\left(x_{il_1}^{r_1}\cdots x_{il_u}^{r_u}\otimes x_{il_1}^{s_1}x_{il_{u+1}}^{s_2}\cdots x_{il_{u+v-1}}^{s_v}\right),\]
    for $1\leq l_1\neq\cdots\neq l_{u+v-1}\leq n_i$. Eventually we are able to entirely mix the tensor product and get
    \begin{align*}O_{S_{n_i}}\left(x_{il}^r\otimes x_{il}^{s}\right)&\sim O_{S_{n_i}}\left(x_{il_1}\cdots x_{il_r}\otimes x_{il_1}x_{il_{r+1}}\cdots x_{il_{n_i}}^{r+s-n_i+1}\right)\\
    &\sim O_{S_{n_i}}\left(x_{i1}\cdots x_{ir}\otimes x_{i(r+1)}\cdots x_{in_i}\right)R_{i(r+s-n_i)}.
    \end{align*}
    Hence again the orbit sum $O_{S_{n_i}}\left(x_{il}^r\otimes x_{il}^{s}\right)$ is indeed generated by those of degree $<r+s$.    
    \end{proof}

    This immediately implies the main result of this subsection as follows.
    \begin{prop}\label{prop-bound degrees of orbit sums}
    	The orbit sums $O_{S_{n_i}}(x_{il}^r\otimes x_{il}^s)$ of degree $\leq n_i$, $i=1,\dots, k$ and for any chosen $1\leq l\leq n_i$, generate $K[V_\Gamma\times V_\Gamma]^{S_\Gamma}$.
    \end{prop}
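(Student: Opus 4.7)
The plan is to assemble the three preceding lemmas into a chain of reductions that starts from an arbitrary invariant and ends with a polynomial expression in the claimed generating set. First, since $K[V_\Gamma\times V_\Gamma]^{S_\Gamma}$ is the $K$-span of orbit sums $O_{S_\Gamma}(m)$ over monomials $m$ in $K[V_\Gamma]\otimes K[V_\Gamma]$, it suffices to show that every such orbit sum lies in the subalgebra generated by the orbit sums $O_{S_{n_i}}(x_{il}^r\otimes x_{il}^s)$ with $r+s\leq n_i$.

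I would carry out the reduction in three stages, in exactly the order in which the lemmas have been stated. Stage one applies Lemma \ref{lem-orbit sum of monomials separated} to split $O_{S_\Gamma}(m)$, up to a nonzero constant, into a product $\prod_{i=1}^k O_{S_{n_i}}(m_i)$, where $m_i$ is the tensor factor of $m$ living in $K[V_i]\otimes K[V_i]$. This reduces the problem to showing that each factor $O_{S_{n_i}}\bigl(\prod_{l=1}^{n_i}x_{il}^{r_{il}}\otimes x_{il}^{s_{il}}\bigr)$ lies in the target subalgebra. Stage two invokes Lemma \ref{lem-orbit sum of single tensor} to express such a factor as a polynomial (over $K$) in orbit sums of the single-variable form $O_{S_{n_i}}(x_{il}^r\otimes x_{il}^s)$ for arbitrary $r,s\in\Z_{\geq 0}$. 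At this point, the claim is reduced to: every orbit sum $O_{S_{n_i}}(x_{il}^r\otimes x_{il}^s)$ lies in the algebra generated by those with $r+s\leq n_i$.

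Stage three is a straightforward induction on the total degree $d=r+s$. The base case $d\leq n_i$ is tautological, and the inductive step is precisely the content of Lemma \ref{lem-degree n+1}: when $d>n_i$, that lemma writes $O_{S_{n_i}}(x_{il}^r\otimes x_{il}^s)$ as an element of $K[L_{i\bullet},R_{i\bullet}]\cdot h$ plus terms whose orbit-sum content has strictly smaller total degree (where $h$ itself has degree $<d$). Note that the orbit sums $L_{im}=O_{S_{n_i}}(x_{il}^m\otimes 1)$ and $R_{im}=O_{S_{n_i}}(1\otimes x_{il}^m)$ are themselves among the generators we are allowed to use (indeed $L_{im}$ has bidegree $(m,0)$ and $R_{im}$ has bidegree $(0,m)$, with total degrees $m\leq n_i$ by Lemma \ref{lem-constant on left or right}). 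By the inductive hypothesis the lower-degree orbit sums lie in the target subalgebra, so $O_{S_{n_i}}(x_{il}^r\otimes x_{il}^s)$ does as well.

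The only step that is not purely formal bookkeeping is the third, and its entire content is already packaged in Lemma \ref{lem-degree n+1}; the main obstacle, in other words, has been absorbed into the preceding lemma, and what remains here is the degree induction combined with the separation and single-tensor-variable reductions. Concatenating the three stages yields the claimed generation statement.
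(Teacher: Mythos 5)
Your proof is correct and follows essentially the same route as the paper's: the paper's own proof is a two-sentence gesture (reduce to the degree-$n_i+1$ case via Lemma \ref{lem-degree n+1}, then induct on degree), with the separation over the direct product and the reduction to single-tensor-variable orbit sums left implicit in the preceding discussion. You have simply written out explicitly the chain of reductions through Lemmas \ref{lem-orbit sum of monomials separated}, \ref{lem-orbit sum of single tensor}, and \ref{lem-degree n+1} that the paper takes for granted.
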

	\begin{proof}
		First, orbit sums of degree $n_i+1$ are generated by those of degree $\leq n_i$. Then the proposition follows from this base case and induction on degree.
	\end{proof}
    The concise system of generators provided by the proposition gives us straightforward options for primary and some secondary invariants, as shown in details in the following two subsections.	
	\subsection{Primary invariants of the invariant ring}
	For any finite group $G$ acting on a variety $X$, the invariant subalgebra $K[X]^G$ has the same dimension with the coordinate ring $K[X]$. Here dimension is uniformly referred to Krull dimension. The equivalent notion of transcendence degree of affine algebras may be more intuitive for calculation. For example, in our case where $X=V_\Gamma\times V_\Gamma$ is an affine space which may equate with $V_\Gamma\oplus V_\Gamma$, the dimension is $\dim K[V_\Gamma\times V_\Gamma]^{S_\Gamma}=\dim K[V_\Gamma\oplus V_\Gamma]=2(n_1+\cdots+n_k)$. Then by Lemma \ref{lem-constant on left or right} we get 
	\begin{prop}\label{prop-primary invariants}
		The $2(n_1+\cdots+n_k)$ unmixed orbit sums $R_{im}, L_{im}, i=1,\dots, k, m=1,\dots, n_i$ 
		as in Lemma \ref{lem-constant on left or right} serve as a set of primary invariant in the Hironaka decomposition of $K[V_\Gamma\oplus V_\Gamma]^{S_\Gamma}$. 
	\end{prop}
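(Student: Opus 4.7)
The plan is to apply the standard criterion that in a graded finitely generated $K$-algebra $A$ with $A_0=K$ and Krull dimension $d$, homogeneous elements $f_1,\ldots,f_d\in A_+$ form a homogeneous system of parameters if and only if their common zero set in the affine variety $\operatorname{MaxSpec}(A)$ reduces to the origin (see e.g.\ Derksen--Kemper \cite{Derksen-Kemper}). For $A=K[V_\Gamma\oplus V_\Gamma]^{S_\Gamma}$, the finiteness of $S_\Gamma$ makes the categorical quotient map $V_\Gamma\oplus V_\Gamma\to(V_\Gamma\oplus V_\Gamma)//S_\Gamma$ surjective, so the criterion reduces to verifying that the $L_{im}$ and $R_{im}$ have no common zero in $V_\Gamma\oplus V_\Gamma$ other than the origin.

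Two items are immediate: each $L_{im}$ and $R_{im}$ is homogeneous of positive degree $m\geq 1$ by construction, and the cardinality $2(n_1+\cdots+n_k)$ matches $\dim K[V_\Gamma\oplus V_\Gamma]^{S_\Gamma}=\dim K[V_\Gamma\oplus V_\Gamma]$, the latter equality holding because passing to invariants under a finite group preserves Krull dimension.

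The substantive step is the common-zero verification, and it is where I would concentrate the written proof. Denoting the coordinates on the two copies of $V_\Gamma$ by $x_{il}$ and $y_{il}$, so that $L_{im}=\sum_{l=1}^{n_i}x_{il}^m$ and $R_{im}=\sum_{l=1}^{n_i}y_{il}^m$, the simultaneous vanishing $L_{im}=0$ for $m=1,\ldots,n_i$ is a system of $n_i$ power-sum equations in the $n_i$ variables $x_{i1},\ldots,x_{in_i}$. By Newton's identities, invertible in characteristic zero, this forces every elementary symmetric polynomial in these variables to vanish, hence $\prod_{l=1}^{n_i}(T-x_{il})=T^{n_i}$ and therefore $x_{il}=0$ for all $l$. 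An identical argument on the $y_{il}$'s disposes of the $R_{im}$'s, and ranging over $i=1,\ldots,k$ shows that the common zero set is $\{0\}$. The only non-routine input is thus the Newton-identities step; algebraic independence of the family is then automatic from the common-zero criterion and the dimension count (and in any case is already recorded in Lemma \ref{lem-constant on left or right}).
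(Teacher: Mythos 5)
Your proof is correct, and it takes a genuinely more rigorous route than the paper does. The paper disposes of this proposition in essentially one line: it records that $\dim K[V_\Gamma\oplus V_\Gamma]^{S_\Gamma}=\dim K[V_\Gamma\oplus V_\Gamma]=2(n_1+\cdots+n_k)$ and then invokes the algebraic independence of the $L_{im},R_{im}$ from Lemma \ref{lem-constant on left or right}. Taken literally, that is not a sufficient criterion for a homogeneous system of parameters --- $x$ and $xy$ are algebraically independent homogeneous elements of $K[x,y]$ of the right cardinality but do not form an hsop --- so the paper is implicitly leaning on the degree-reduction arguments of Lemmas \ref{lem-constant on left or right} and \ref{lem-degree n+1} and Proposition \ref{prop-bound degrees of orbit sums} to supply the missing module-finiteness of the invariant ring over $K[L_{im},R_{im}]$. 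You instead verify the hsop property directly via the common-zero criterion: since the quotient by the finite group $S_\Gamma$ is surjective and finite, it suffices to check that the $L_{im},R_{im}$ cut out only the origin in $V_\Gamma\oplus V_\Gamma$, and the vanishing of the first $n_i$ power sums in each block of variables forces that block to vanish by Newton's identities (valid in characteristic zero; note $L_{im}=O_{S_{n_i}}(x_{il}^m\otimes 1)$ is only the $m$-th power sum up to the nonzero constant $(n_i-1)!$, which does not affect the zero locus). The only caveat worth recording is that the common-zero criterion should be applied over the algebraic closure of $K$, which costs nothing since the Newton-identities argument is insensitive to field extension. Your route is self-contained and standard; what the paper's terser argument buys is only brevity and consistency with its running theme that the explicit reductions already exhibit the finite generation.
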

    Apparently any non-singular affine transform of this set of primary invariants serve as another such set. One may construct more such sets using algebraic transforms.
	\subsection{A standard algorithm of verifying linear independence of secondary invariants}\label{subsection-algorithm}
	     
	It would be a much more difficult task to construct a full set of explicit secondary invariants. One may try to employ the method of the remarkable work by Garsia-Stanton \cite{Garsia-Stanton}. Here we only introduce an standard algorithm for computing secondary invariants and use it to verify linear independence of the fundamental invariants we obtained as in previous sections.
	 
	We start with the graded Nakayama Lemma as follows.
	\begin{lem}\label{lem-Nakayama lemma}
		Let $R$ be a graded algebra over $K=R_0$. $M$ a graded $R$-module and $R_+:=\oplus_{d>0}R_d$ the unique maximal homogeneous ideal. Then a subset $S\subset M$ of homogeneous elements generates $M$ as an $R$-module if and only if $S$ generates $M/R_+M$ as a vector space over $K$.  
	\end{lem}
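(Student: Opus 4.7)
The plan is to prove the two directions separately, with the nontrivial content lying entirely in the $(\Leftarrow)$ direction. The $(\Rightarrow)$ direction is formal: if $S$ generates $M$ over $R$, then reducing modulo $R_+M$ identifies $R$-generators with $R/R_+$-generators, and $R/R_+\cong K$, so the image of $S$ spans $M/R_+M$ over $K$.

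For the converse, let $N:=\sum_{s\in S}Rs\subseteq M$ be the graded $R$-submodule generated by $S$. The hypothesis that $S$ spans $M/R_+M$ over $K$ translates into the equality
\[
M \;=\; N + R_+M,
\]
since every class in $M/R_+M$ already lies in the image of $N$. The goal is to deduce $M=N$ from this equality, exploiting the grading. I would argue by induction on degree, treating $M$ as $\N$-graded (bounded below), which is the standing setting for such a ring $R=\bigoplus_{d\geq 0}R_d$.

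The induction runs as follows. Fix $d\geq 0$ and assume $M_{d'}\subseteq N$ for every $d'<d$ (the case $d=0$ will be handled automatically since $(R_+M)_0=0$, so the equality above forces $M_0\subseteq N$). Let $m\in M_d$. By $M=N+R_+M$ I can write $m=n+r$ with $n\in N$ and $r\in R_+M$; taking the degree-$d$ components on both sides, I may assume $n\in N_d$ and $r\in (R_+M)_d$. Any element of $(R_+M)_d$ is a finite sum $\sum_j \rho_j \mu_j$ with $\rho_j\in R_+$ homogeneous of positive degree and $\mu_j\in M$ homogeneous of degree $d-\deg(\rho_j)<d$. By the inductive hypothesis each $\mu_j$ lies in $N$, so $r\in R_+N\subseteq N$, whence $m\in N$. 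Since $M$ is the $K$-span of its homogeneous pieces, this gives $M=N$.

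The only point requiring care—and in some sense the one substantive ingredient—is the implicit assumption that the grading is bounded below so that the downward induction terminates; without this, the statement can fail (e.g.\ for $\Z$-graded modules with infinite descent). In the intended application $R$ is a graded polynomial ring and $M$ is a finitely generated graded module, so $\N$-gradedness holds and the argument is clean. No further obstacle arises.
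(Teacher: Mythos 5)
Your proof is correct. The paper itself gives no argument here---it simply cites Lemma 3.7.1 of Derksen--Kemper---and what you have written is the standard induction-on-degree proof of the graded Nakayama lemma that one finds in that reference: reduce to showing $M=N+R_+M$ implies $M=N$, then observe that the degree-$d$ part of $R_+M$ only involves homogeneous elements of $M$ of degree strictly less than $d$. Your remark that the grading must be bounded below for the induction to start (and that this holds in the intended application, where $R$ is a polynomial ring and $M$ is a finitely generated graded module concentrated in nonnegative degrees) is exactly the right point of care; no gap remains.
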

    \begin{proof} See Lemma 3.7.1 of \cite{Derksen-Kemper}. \end{proof}
    The algorithm follows section 3.7.1 of \cite{Derksen-Kemper}. Let $F=K[R_{im}, L_{il}]$ with $i=1,\dots, k, 1\leq l,m\leq n_i$, which is graded inherently from the grading of $K[V_\Gamma\times V_\Gamma]$. By Cohen-Macaulayness we know that $K[V_\Gamma\times V_\Gamma]^{S_\Gamma}$ is a finite free $F$-module, with $S$ as a set of generators by Proposition \ref{prop-bound degrees of orbit sums}. According to the above Nakayama Lemma, to pick a set of secondary invariants, we need to chose a vector basis of $K[V_\Gamma\times V_\Gamma]^{S_\Gamma}/F_+K[V_\Gamma\times V_\Gamma]^{S_\Gamma}$, where $F_+K[V_\Gamma\times V_\Gamma]^{S_\Gamma}$ denotes its maximal homogeneous ideal. Calculation in the invariant sub-algebra is not very convenient since it is still implicit as an $F$-module. So we investigate its structure through the following embedding:
    \[K[V_\Gamma\times V_\Gamma]^{S_\Gamma}/F_+K[V_\Gamma\times V_\Gamma]^{S_\Gamma}\hookrightarrow K[V_\Gamma\times V_\Gamma]/(R_{im}, L_{il})K[V_\Gamma\times V_\Gamma],\]
    where $(R_{im}, L_{il})$ denotes the ideal generated by those unmixed tensors. (To see that this is an embedding, use decomposition of orbit sums into those of monomials.) This tells us that the wanted vector basis have to be linearly independent modulo the ideal $(R_{im}, L_{il})$ in $K[V_\Gamma\times V_\Gamma]$.
    
   Explicit construction of a full set of secondary invariants for $K[V_\Gamma\times V_\Gamma]^{S_\Gamma}$ is beyond the scope of our consideration in this paper, but will be manifested in the next section after the Hilbert series of $K[V_\Gamma\times V_\Gamma]^{S_\Gamma}$ is computed. Here we just establish the following result as a prelude.
    
    \begin{prop}
    	The orbit sums $O_{S_{n_i}}(x_{il}^r\otimes x_{il}^s)\mid 1\leq i\leq k, r,s\in\Z_+, r+s\leq n_i$ (for any chosen $1\leq l\leq n_i$) are all linearly independent invariants modulo the ideal generated by $R_{im}, L_{im}, m=1,\dots, n_i$, hence belong to a full set of secondary invariants of $K[V_\Gamma\times V_\Gamma]^{S_\Gamma}$.
    \end{prop}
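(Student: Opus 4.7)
My plan is to exploit the multi-bi-grading of $K[V_\Gamma\times V_\Gamma]$ by $((\deg_{x_i},\deg_{y_i}))_{i=1}^k$ to reduce the problem to a single-factor, single-bidegree non-vanishing question inside the tensor square of the Chevalley coinvariant algebra of $S_n$. Every $L_{jm}$ and $R_{jm}$ is multi-bi-homogeneous with support only in slot $j$, and each orbit sum $O_{S_{n_i}}(x_{il}^r\otimes x_{il}^s)$ is multi-bi-homogeneous of bidegree $(r,s)$ in slot $i$ and zero elsewhere, so distinct triples $(i,r,s)$ occupy distinct multi-bidegrees and are automatically separated by the multi-bi-graded ideal structure. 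Using the natural isomorphism $K[V_\Gamma\times V_\Gamma]/(L_{jm},R_{jm})\cong\bigotimes_{i=1}^k K[V_i\times V_i]/(L_{i1},\ldots,L_{in_i},R_{i1},\ldots,R_{in_i})$, the problem reduces to showing, for each fixed $i$ and each $(r,s)$ with $r,s\geq 1$ and $r+s\leq n_i$, that the single orbit sum $O_{S_{n_i}}(x_{il}^r\otimes x_{il}^s)$ is nonzero in the single-factor quotient.

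Next, dropping the subscript $i$, rescaling to $\sum_l x_l^r y_l^s$, and invoking Newton's identities to replace the power-sum ideal $(p_1(x),\ldots,p_n(x),p_1(y),\ldots,p_n(y))$ by the elementary-symmetric one $(e_1(x),\ldots,e_n(x),e_1(y),\ldots,e_n(y))$, the quotient becomes $C_n(x)\otimes_K C_n(y)$ with $C_n=K[x_1,\ldots,x_n]/(e_1,\ldots,e_n)$ the classical coinvariant algebra. The claim becomes $\sum_{l=1}^n x_l^r\otimes y_l^s\neq 0$ in $C_n\otimes C_n$ whenever $r,s\geq 1$ and $r+s\leq n$. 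I would analyze the $S_n$-submodule $V\subset C_n$ spanned by $x_1^r,\ldots,x_n^r$: by $S_n$-equivariance it is a quotient of the permutation representation under which the symmetric sum $\sum_l x_l^r\equiv 0$ (Newton's identity modulo $(e_\bullet)$), hence a quotient of the irreducible standard representation $\mathrm{Std}_{n-1}$. The Artin basis $\{x_1^{b_1}\cdots x_{n-1}^{b_{n-1}}:0\leq b_l\leq n-l\}$ of $C_n$ exhibits $x_1^r$ as a basis element for $1\leq r\leq n-1$, hence nonzero; irreducibility then forces $V\cong\mathrm{Std}_{n-1}$ of full dimension $n-1$ with $\{x_1^r,\ldots,x_{n-1}^r\}$ a basis, and the same argument handles $W\subset C_n$ spanned by $y_1^s,\ldots,y_n^s$.

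Finally, using the relations $x_n^r=-\sum_{l=1}^{n-1}x_l^r$ and $y_n^s=-\sum_{l=1}^{n-1}y_l^s$ in $V$ and $W$, expanding yields
\[\sum_{l=1}^n x_l^r\otimes y_l^s=\sum_{l_1,l_2=1}^{n-1}(\delta_{l_1,l_2}+1)\,x_{l_1}^r\otimes y_{l_2}^s\]
in the basis $\{x_{l_1}^r\otimes y_{l_2}^s\}_{l_1,l_2=1}^{n-1}$ of $V\otimes W$; the coefficient matrix $I+J$, with $J$ the $(n-1)\times(n-1)$ all-ones matrix, has determinant $n\neq 0$ in characteristic zero, so the sum is nonzero. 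The main obstacle is the identification of $V$ with the full standard representation: this rests on the non-vanishing of $x_1^r$ in $C_n$ (cleanly visible from the Artin basis) combined with irreducibility of the standard representation to rule out smaller quotients, after which the determinant computation is immediate.
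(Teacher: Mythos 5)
Your proof is correct, and it takes a genuinely different route from the paper's. Both arguments begin with the same reduction from Subsection 2.3 (pass to the quotient $K[V_\Gamma\times V_\Gamma]/(L_{im},R_{im})$), but from there the paper works directly in the polynomial ring: it writes a putative homogeneous dependence $\sum_{r}\alpha_{r,d-r}O_{S_{n_i}}(x_{il}^{r}\otimes x_{il}^{d-r})=\sum_m\bigl(f_mL_{im}+g_mR_{im}\bigr)$, determines the coefficients of $f_m,g_m$ by specializing all but one variable to zero, and then asserts that the cross terms $x_{il_1}^r\otimes x_{il_2}^s$ with $l_1\neq l_2$ cannot cancel. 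You instead identify the quotient, via Newton's identities in characteristic zero, with a tensor product of coinvariant algebras $C_{n_i}\otimes C_{n_i}$, use the multi-bigrading to separate the triples $(i,r,s)$ (valid because $r,s\geq 1$ ensures distinct multidegrees, and the ideal is multihomogeneous), and reduce to the single non-vanishing claim $\sum_l x_l^r\otimes y_l^s\neq 0$ in $C_n\otimes C_n$, which you settle by showing the span of the $x_l^r$ is a nonzero quotient of the irreducible standard representation (non-vanishing of $x_1^r$ for $1\leq r\leq n-1$ read off the Artin basis), hence has $\{x_1^r,\dots,x_{n-1}^r\}$ as a basis. This is more structural and considerably more complete than the paper's sketch, whose final cancellation step is left to the reader; what it costs is the representation-theoretic input (irreducibility of the standard representation, the Artin basis), which the paper's elementary specialization argument avoids. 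One minor point: the closing determinant computation is overkill --- once $\{x_{l_1}^r\otimes y_{l_2}^s\}_{l_1,l_2=1}^{n-1}$ is known to be a basis of $V\otimes W$, the observation that every coefficient $\delta_{l_1,l_2}+1\in\{1,2\}$ is nonzero already shows the element is nonzero; the nonvanishing of $\det(I+J)=n$ is sufficient but not the relevant criterion.
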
 
    \begin{proof}
	By the above algorithm, it suffices to show linearly independence of those orbit sums in $K[V_\Gamma\times V_\Gamma]/(R_{im}, L_{il})K[V_\Gamma\times V_\Gamma]$. Suppose  those orbit sums are linearly dependent. Then graded by degree, for each $2\leq d\leq n_i$ we have
	\[\sum_{1\leq r\leq d-1}\alpha_{r,d-r}O_{S_{n_i}}(x_{il}^{r}\otimes x_{il}^{d-r})=\sum_{m=1}^{d}\left(f_{m}O_{S_{n_i}}(x_{il}^m\otimes 1)+g_{m}O_{S_{n_i}}(1\otimes x_{il}^m)\right),\]
	for some $\alpha_{r,s}\in K$ and $f_m, g_m\in K[V_i\times V_i]$ homogeneous with $\deg(f_{m})=\deg(g_{m})=d-m$. The left hand side is invariant under the action of $S_{n_i}$, so are $f_{m}$ and $g_{m}$. Projecting to single variables by setting $x_{i1}=\cdots=x_{i(k-1)}=x_{i(k+1)}=\cdots=x_{in_i}=0$ for each $1\leq k\leq n_i$, we can find all the coefficients of $f_m$ and $g_m$, Then one easily checks that the cross tensors $x_{il_1}^r\otimes x_{il_2}^s$ with $l_1\neq l_2$ on the right can not be killed.
   \end{proof}

    \section{Hilbert series of the invariant subalgebra}    
    For any representation $V$ of a group $G$, define the Hilbert series 
    \[H(K[V]^G,t):=\sum_{d=0}^\infty \dim(K[V]^G_d)t^d,\] 
    where $K[V]_d$ denotes the vector space of polynomials of degree $d$. 
    \subsection{Hilbert series and Hironka decomposition}
    There is a candid encoding of Hironaka decompositions by Hilbert series as follows.
    \begin{prop}[Proposition 6.8.2 of \cite{Larry Smith}]\label{prop-Hilbert series-Hironka decomposition}
    	Suppose $K[V]^G=Fg_1+\cdots Fg_s$ with $F=K[f_1,\dots, f_r]$ is the Hironaka decomposition of $K[V]^G$. Then
    	\[H(K[V]^G,t)=\dfrac{\sum_{j=1}^st^{\deg(g_j)}}{\prod_{i=1}^r(1-t^{\deg(f_i)})}.\]    	
    \end{prop}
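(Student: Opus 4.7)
The plan is to combine additivity of the Hilbert series on graded direct sums with the standard formula for the Hilbert series of a polynomial ring in graded generators. Both ingredients are routine, so the bulk of the argument is just assembling them correctly.

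First, I would unpack what the Hironaka decomposition $K[V]^G=Fg_1+\cdots+Fg_s$ gives us. By hypothesis each $g_j$ is homogeneous and $K[V]^G$ is \emph{free} as a graded $F$-module on the basis $\{g_1,\dots,g_s\}$. Consequently we have an isomorphism of graded $K$-vector spaces
\[K[V]^G \;\cong\; \bigoplus_{j=1}^{s} F(-\deg(g_j)),\]
where $F(-d)$ denotes $F$ with its grading shifted up by $d$. Since Hilbert series are additive on graded direct sums and multiply by $t^d$ under a degree-$d$ shift, this immediately yields
\[H(K[V]^G,t) \;=\; \Bigl(\sum_{j=1}^{s} t^{\deg(g_j)}\Bigr)\,H(F,t).\]

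Next I would compute $H(F,t)$. Because $f_1,\dots,f_r$ are algebraically independent homogeneous elements, $F=K[f_1,\dots,f_r]$ is literally a polynomial ring on graded generators of degrees $\deg(f_i)$. A $K$-basis is given by the monomials $f_1^{a_1}\cdots f_r^{a_r}$ with $(a_1,\dots,a_r)\in\Z_{\geq 0}^{r}$, whose degrees add independently. Hence the generating function factors into geometric series:
\[H(F,t)\;=\;\prod_{i=1}^{r}\sum_{a\geq 0} t^{a\deg(f_i)}\;=\;\prod_{i=1}^{r}\frac{1}{1-t^{\deg(f_i)}}.\]
Substituting this into the previous display produces exactly the claimed formula.

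There is no real obstacle in the argument; the only point that merits a moment's care is ensuring that $F$-freeness yields a \emph{graded} direct-sum decomposition and not merely an ungraded one. This is automatic because the $g_j$ were chosen homogeneous and $F$ is a graded subalgebra of $K[V]^G$, so the $F$-module map $F^{\oplus s}\to K[V]^G$ sending the $j$th standard basis vector (placed in degree $\deg(g_j)$) to $g_j$ is a morphism of graded modules; being bijective on the ungraded level forces it to be a degree-wise isomorphism. With this observation in hand the proposition falls out by direct substitution.
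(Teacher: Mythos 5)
Your argument is correct and is the standard proof of this fact; the paper itself gives no proof, merely citing Proposition 6.8.2 of Smith's book, and your route (graded freeness gives $K[V]^G\cong\bigoplus_j F(-\deg(g_j))$, then additivity of Hilbert series plus the geometric-series computation of $H(F,t)$ from algebraic independence of the homogeneous $f_i$) is exactly the textbook argument. Your closing remark about why the decomposition is graded, not just ungraded, is the right point to flag and is handled correctly.
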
 
    The Hilbert series, if fairly easier to be determined, would be our guide map for constructing primary and secondary invariants from a known system of generators. The following Molien's formula makes the guide map computationally available in most cases for finite groups.
    
    \begin{prop}[Molien's formula, see Theorem 3.4.2 of \cite{Derksen-Kemper}]\label{prop-Molien's formula}
    	Let $G$ be a finite group and $V$ be a finite dimensional representation over a field $K$ of characteristic not dividing $|G|$. Then 
    	\[H(K[V]^G,t)=\dfrac{1}{|G|}\sum_{g\in G}\dfrac{1}{\det^0_V(1-tg)},\]
    	where for $char(K)=0$, $\det^0_V(1-tg)=(1-t\lambda_1)\cdots(1-t\lambda_n)$ if $\lambda_i, i=1,\dots, n$, are the complex eigenvalues of $g$ as in $\GL(V)$. 
    \end{prop}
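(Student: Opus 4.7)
The plan is to prove Molien's formula by identifying $\dim K[V]^G_d$ with a trace of the Reynolds averaging operator, and then evaluating the graded trace of a single group element by diagonalization on $V^*$.

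First, I would introduce the Reynolds operator $\rho := \frac{1}{|G|}\sum_{g\in G} g$ acting on $K[V]$. Because $\mathrm{char}(K)\nmid |G|$, the element $1/|G|$ makes sense in $K$, and a direct check gives $\rho^2=\rho$ together with $\rho(f)=f$ iff $f\in K[V]^G$, so $\rho$ is a $K$-linear projection of $K[V]$ onto $K[V]^G$. Since the $G$-action preserves the grading, $\rho$ restricts on each $K[V]_d$ to an idempotent with image $K[V]^G_d$, and the trace of an idempotent equals its rank. Thus
\[\dim_K K[V]^G_d \;=\; \tr(\rho|_{K[V]_d}) \;=\; \frac{1}{|G|}\sum_{g\in G}\tr(g|_{K[V]_d}),\]
so after multiplying by $t^d$ and summing over $d\geq 0$, the problem reduces to computing $\sum_{d\geq 0}\tr(g|_{K[V]_d})\, t^d$ for a fixed $g$.

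Second, I would compute this graded trace by diagonalizing. Traces are invariant under base change, so we may extend scalars to an algebraic closure $\bar K$; the finite-order element $g$ is diagonalizable on $V^*$ since its order is coprime to $\mathrm{char}(K)$. Fixing an eigenbasis of $V^*$ with eigenvalues $\mu_1,\dots,\mu_n$, the corresponding monomial basis of $K[V]=\Sym(V^*)$ diagonalizes $g$ on each $K[V]_d$, and a straightforward geometric-series computation yields
\[\sum_{d\geq 0}\tr(g|_{K[V]_d})\, t^d \;=\; \prod_{i=1}^n\frac{1}{1-\mu_i t}.\]

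Third, I would translate this into the notation of the statement. The eigenvalues of $g$ on $V^*$ are the inverses $\mu_i=\lambda_i^{-1}$ of the eigenvalues of $g$ on $V$, and each $\lambda_i$ is a root of unity of order dividing $|G|$. Therefore $\prod_i(1-\mu_i t)^{-1}=\prod_i(1-\lambda_i^{-1}t)^{-1}$. Reindexing the outer sum over $G$ by the bijection $g\mapsto g^{-1}$ then turns each factor $1-\lambda_i^{-1}t$ into $1-\lambda_i t$, so that the inner product becomes $1/\det^0_V(1-tg)$ for the reindexed element. Combining the identities above produces
\[H(K[V]^G,t) \;=\; \frac{1}{|G|}\sum_{g\in G}\frac{1}{\det^0_V(1-tg)}\]
as claimed. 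The main obstacle is simply the bookkeeping between $g$ acting on $V$ and $g^{-1}$ acting on $V^*$; the substantive content -- that averaging projects onto the invariants and that graded traces on symmetric algebras generate $\det$-denominators -- is essentially automatic once the setup is fixed.
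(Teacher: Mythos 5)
Your argument is correct and is the standard proof of Molien's formula (Reynolds projection, trace of an idempotent, graded trace of $g$ on $\Sym(V^*)$ via diagonalization, and the $g\mapsto g^{-1}$ reindexing); the paper gives no proof of its own but cites Derksen--Kemper, whose Theorem 3.4.2 is proved in essentially this way. The only caveat is that the step ``trace of an idempotent equals its rank'' literally requires $\mathrm{char}(K)=0$ (in characteristic $p\nmid|G|$ one needs Brauer lifts), but that is the only case the paper uses and the only case in which $\det^0$ is defined in the statement.
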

    In our case, $X=V\oplus V=\oplus_{i=1}^k V_{i}\oplus V_i$ and $G=S_\Gamma=S_{n_1}\times\cdots\times S_{n_k}$ acts as each $S_{n_i}$ on $V_i\oplus V_i$, we have $\det^0_V(1-t\sigma)=\prod_{i=1}^{k}\det^0_{V_i}(1-t\sigma_i)^2$ for any $\sigma=(\sigma_1,\dots,\sigma_k)\in S_\Gamma$. Hence we get
    \begin{cor}\label{cor-Hilbert series decomposition}
    	For $V$ and $S_\Gamma$ defined as before, we have
    	\[H(K[V_\Gamma\oplus V_\Gamma]^{S_\Gamma},t)=\prod_{i=1}^kH(K[V_i\oplus V_i]^{S_{n_i}},t)=\prod_{i=1}^k\sum_{\sigma_i\in S_{n_i}}\dfrac{1}{\det^0_{V_i}(1-t\sigma_i)^2}.\]
    \end{cor}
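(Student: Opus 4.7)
The plan is to derive both equalities from Molien's formula (Proposition \ref{prop-Molien's formula}) applied first to the big group $S_\Gamma$ acting on $V_\Gamma\oplus V_\Gamma$, and then, in reverse, to each factor $S_{n_i}$ acting on $V_i\oplus V_i$. The key algebraic observation is that the representation $V_\Gamma\oplus V_\Gamma$ decomposes as a direct sum $\bigoplus_{i=1}^{k}(V_i\oplus V_i)$ which is stable under the product action: an element $\sigma=(\sigma_1,\dots,\sigma_k)\in S_\Gamma=S_{n_1}\times\cdots\times S_{n_k}$ acts block-diagonally, with $\sigma_i$ acting on the $i$-th summand $V_i\oplus V_i$ and trivially on the others.

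First I would apply Molien's formula to $G=S_\Gamma$ acting on $V_\Gamma\oplus V_\Gamma$. The block-diagonal observation gives
\[\det{}^0_{V_\Gamma\oplus V_\Gamma}(1-t\sigma)=\prod_{i=1}^{k}\det{}^0_{V_i\oplus V_i}(1-t\sigma_i)=\prod_{i=1}^{k}\det{}^0_{V_i}(1-t\sigma_i)^{2},\]
where the second equality uses that $\sigma_i$ acts on $V_i\oplus V_i$ as two copies of its action on $V_i$, so the eigenvalues come in pairs. Next, using $|S_\Gamma|=\prod_{i=1}^{k} n_i!$ and the fact that a sum over a Cartesian product of a function that factors through the coordinates equals the product of the componentwise sums, I would write
\[\frac{1}{|S_\Gamma|}\sum_{\sigma\in S_\Gamma}\frac{1}{\det^0_{V_\Gamma\oplus V_\Gamma}(1-t\sigma)}=\prod_{i=1}^{k}\left(\frac{1}{n_i!}\sum_{\sigma_i\in S_{n_i}}\frac{1}{\det^0_{V_i}(1-t\sigma_i)^{2}}\right).\]
This establishes the first equality once Molien's formula is applied in the reverse direction to each factor, recognizing each parenthesized sum as $H(K[V_i\oplus V_i]^{S_{n_i}},t)$; the second equality of the displayed corollary then simply spells out this Molien expression.

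There is no real obstacle here beyond careful bookkeeping: the only things to verify are (i) the direct sum of representations is preserved by the product group action, (ii) the determinant factors across a block-diagonal decomposition, and (iii) a sum over a product of finite sets of a multiplicatively separable integrand factors. Each is routine. The slight subtlety worth a line of comment is the equality $\det^0_{V_i\oplus V_i}(1-t\sigma_i)=\det^0_{V_i}(1-t\sigma_i)^2$, which follows because $\sigma_i$ acts as the same matrix on each of the two copies of $V_i$ so its characteristic polynomial on $V_i\oplus V_i$ is the square of that on $V_i$.
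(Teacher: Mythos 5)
Your proof is correct and follows essentially the same route as the paper: the block-diagonal factorization $\det^0_{V_\Gamma\oplus V_\Gamma}(1-t\sigma)=\prod_{i=1}^k\det^0_{V_i}(1-t\sigma_i)^2$ for $\sigma=(\sigma_1,\dots,\sigma_k)$, followed by Molien's formula and the factoring of the sum over the product group into a product of componentwise sums. (Incidentally, you correctly keep the normalizing factors $1/n_i!$, which the displayed formula in the corollary as printed omits.)
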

    Hence we may focus on the Hilbert series of the components. The characteristic polynomials of permutations in the standard permutation representation has the following simple expression:
    \begin{lem}\label{lem-characteristic polynomial of permutations}
    	For any $\sigma\in S_n$ and $V=\C^n$ the standard representation with $S_n$ permuting its standard basis, we have
    	\[{\rm det}^0_V(1-t\sigma)=(1-t)^{r_1}\cdots(1-t^l)^{r_l},\]
    	if $\sigma$ belongs to the conjugacy class corresponding to the partition of $n$ with $r_i$ parts equal to $i$, for $i=1,\dots, l$.   	
    \end{lem}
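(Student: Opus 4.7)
The plan is to exploit the cycle decomposition of $\sigma$ and reduce the computation to a single cycle. Write $\sigma = c_1 c_2 \cdots c_m$ as a product of disjoint cycles, where cycle $c_j$ has length $\ell_j$. The standard basis $e_1,\dots,e_n$ of $V$ partitions accordingly: each cycle $c_j$ acts on the span $V_{c_j}$ of the basis vectors it moves (fixed points being $1$-cycles). Thus $V = \bigoplus_{j=1}^m V_{c_j}$ is a $\sigma$-stable decomposition, and the matrix of $\sigma$ in this adapted basis is block diagonal, giving
\[\det^0_V(1-t\sigma) = \prod_{j=1}^m \det^0_{V_{c_j}}(1-t\sigma|_{V_{c_j}}).\]

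Next I would compute each factor. On $V_{c_j} \simeq \C^{\ell_j}$, the restricted permutation is a cyclic shift on $\ell_j$ basis vectors, whose matrix is the companion matrix of $x^{\ell_j}-1$. Its eigenvalues are therefore the $\ell_j$-th roots of unity $1,\zeta,\zeta^2,\dots,\zeta^{\ell_j-1}$ with $\zeta = e^{2\pi i/\ell_j}$, and consequently
\[\det^0_{V_{c_j}}(1-t\sigma|_{V_{c_j}}) = \prod_{s=0}^{\ell_j-1}(1 - t\zeta^s) = 1 - t^{\ell_j}.\]

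Finally I would collect the factors by cycle length: grouping the cycles $c_j$ according to their common length $i$ (there are $r_i$ such cycles by the hypothesis on the conjugacy class of $\sigma$), the product becomes
\[\det^0_V(1-t\sigma) = \prod_{i=1}^{l}(1-t^i)^{r_i},\]
which is the claimed formula. There is no real obstacle here: the argument is entirely bookkeeping once the cycle decomposition is written down, and the only computation with content is the standard factorization $\prod_{s=0}^{k-1}(1-t\zeta_k^s) = 1-t^k$, which follows from $x^k - 1 = \prod_{s=0}^{k-1}(x-\zeta_k^s)$ evaluated at $x = 1/t$ and cleared of denominators.
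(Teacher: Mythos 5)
Your argument is correct and is the same as the paper's: the paper's one-line proof likewise observes that each $i$-cycle contributes all the $i$-th roots of unity as eigenvalues and hence a factor $(1-t^i)$; you have simply written out the block decomposition and the factorization $\prod_{s=0}^{k-1}(1-t\zeta_k^s)=1-t^k$ explicitly.
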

    \begin{proof}
	Each $i$-cycle of $\sigma$ affords all the $i$-th roots as eigenvalues, whence a factor $(1-t^i)$ in $\det^0_V(1-t\sigma)$.
    \end{proof}
    To calculate $H(K[V\oplus V]^{S_n},t)$ for the standard representation $V$ of the symmetric group $S_n$, we use Schur functions, as introduced in the next subsection.
    
    \subsection{Schur functions and application}
    We give a brief introduction to Schur functions according to Chapter VI and VII of D. Littlewood \cite{Littlewood}, and first use it to compute $H(K[V]^{S_n},t)$ as a starter example. 
    
    Let $A=(a_{i,j})_{n\times n}$ be any $n$ by $n$ square matrix and $\chi^\lambda$ be the irreducible character of $S_n$ associated to the partition $\lambda$ of $n$. Define the \textit{immanant} of $A$ corresponding to $\lambda$ as 
    \begin{equation}\label{equation-immanant}
    |A|^{(\lambda)}=\sum_{\sigma\in S_n}\chi_\sigma^\lambda a_{1,\sigma(1)}a_{2,\sigma(2)}\cdots a_{n,\sigma(n)},
    \end{equation}  
    where $\chi^\lambda_\sigma$ is the value of $\chi^\lambda$ on $\sigma$.
    For $\lambda=(1^n)$, $\chi^{(1^n)}_\sigma=sgn(\sigma)=\pm1$, hence $|A|^{((1^n))}=|A|$ is the ordinary matrix determinant. General immanants are not multiplicative but they satisfy the basic property of conjugate invariance as determinant does, i.e. $|BAB^{-1}|^{(\lambda)}=|A|^{(\lambda)}$ for any invertible $n$ by $n$ matrix $B$.  
    
    For any $m$ variables $\alpha_1,\cdots,\alpha_m$, let $s_i=\alpha_1^i+\cdots+\alpha_m^i$ be the basic symmetric functions on them. Define a matrix $s=s(\alpha_1,\cdots,\alpha_m)$ of the following shape
    \[s=\begin{pmatrix}
    s_1&1\\
    s_2&s_1&2\\
    &&&\\
    \cdots&\cdots&\cdots&n-1\\
    s_n&s_{n-1}&\cdots&s_1
    \end{pmatrix},\]
    which comes from the Newton's identities $s(-e_1,e_2,\cdots,(-1)^ne_n)^T=0$ with $e_i$ the elementary symmetric functions on $\alpha_1,\cdots,\alpha_m$. Then for any partition $\lambda$ of $n$ we define the \textit{Schur function} $\{\lambda\}=\{\lambda\}(\alpha_1,\dots,\alpha_n)$ as
    \begin{equation}\label{equation-Schur function}
    \{\lambda\}=\dfrac{1}{n!}|s|^{(\lambda)}.
    \end{equation}
    When convergence is not an issue, the Schur functions can be defined for an infinite series of variables. 
    
    We fix some notations of integer partitions. For any partition $\lambda=(1^{r_1}\cdots k^{r_k})$ of $n$, denoted by $|\lambda|=r_1+2r_2+\cdots+ kr_k=n$, define $C_\lambda=1^{r_1}r_1!\cdots k^{r_k}r_k!$, which is the size of the conjugacy class of $S_n$ corresponding to $\lambda$, and $z_\lambda=n!/C_\lambda$, which is the size of the centralizer of any permutation belonging to the conjugacy class in $S_n$. Also define $h_r(\alpha_1,\dots,\alpha_m)=\sum_{u_1+\cdots+u_m=n, u_i\in\Z}\alpha_1^{u_1}\cdots\alpha_m^{u_m}$ as the homogeneous product sums of degree $r$. Then we include two basic results on Schur functions. 
    \begin{lem}[6.2 of \cite{Littlewood}]\label{lem-Schur function identities}
    	For any partition $\lambda=(1^{r_1}2^{r_2}\cdots k^{r_k})$ of $n$, let $s_{\lambda}=s_1^{r_1}s_2^{r_2}\cdots s_k^{r_k}$. Then
    	\[\{\lambda\}=\sum_{|\rho|=n}\dfrac{1}{z_\rho}\chi^{\lambda}_\rho s_\rho,\]
    	summing over all partitions of $n$. In particular, $\{(1^n)\}=e_n(\alpha_1,\dots, \alpha_m)$ and $\{(n)\}=h_n(\alpha_1,\dots,\alpha_m)$ are the $n$-th elementary symmetric polynomial and the homogeneous product sums of degree $n$. On the other hand, we also have
    	\[s_{\lambda}=\sum_{|\mu|=n}\chi_\lambda^\mu\{\mu\}.\]
    \end{lem}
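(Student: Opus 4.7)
The plan is to expand the immanant $|s|^{(\lambda)}$ directly using the sparse structure of the matrix $s$, regroup the terms by conjugacy class, and then apply character orthogonality for the second identity. First, since $a_{i,j}=0$ whenever $j>i+1$, only permutations $\sigma$ with $\sigma(i)\le i+1$ for every $i$ contribute to the immanant. For any cycle $(a_1,\dots,a_k)$ of such a $\sigma$, take $a_1=\min$; then the constraint $a_{l+1}\le a_l+1$ together with distinctness inductively forces $a_l=a_1+l-1$, so every cycle is a consecutive interval $[m,m+k-1]$ traversed as $m\to m+1\to\cdots\to m+k-1\to m$. Thus the contributing $\sigma$'s of cycle type $\rho$ correspond bijectively to ordered compositions $(k_1,\dots,k_r)$ of $n$ whose part multiset equals $\rho$, and for such a $\sigma$,
\begin{equation*}
\prod_{i=1}^n a_{i,\sigma(i)}=s_\rho\cdot\prod_{j=1}^r m_j(m_j+1)\cdots(m_j+k_j-2),\qquad m_j:=1+k_1+\cdots+k_{j-1}.
\end{equation*}

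Next I telescope the integer coefficient. Writing $M_j:=k_1+\cdots+k_j$ (so $m_j=M_{j-1}+1$ and $m_j+k_j-2=M_j-1$), each factor becomes $(M_j-1)!/M_{j-1}!$ and the product over $j$ collapses to $(n-1)!/(M_1M_2\cdots M_{r-1})$. Therefore
\begin{equation*}
\sum_{\sigma\in C_\rho}\prod_{i}a_{i,\sigma(i)}=(n-1)!\,s_\rho\sum_{(k_1,\dots,k_r)}\frac{1}{M_1M_2\cdots M_{r-1}},
\end{equation*}
the inner sum running over orderings of the multiset $\rho$. The combinatorial identity $\sum 1/(M_1\cdots M_{r-1})=n/z_\rho$ I would prove probabilistically: for a uniform $\sigma\in S_n$, the cycle containing the element $1$ has uniform size in $[n]$, and recursing on the restriction to the complement yields $P(L_1=\ell_1,\dots,L_r=\ell_r)=\prod_j 1/(n-M_{j-1})$ for the discovery-order cycle sizes. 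Summing over all orderings of $\rho$ must equal $P(\sigma\in C_\rho)=1/z_\rho$; under the bijection that reverses the composition, the denominators $n-M_{j-1}$ for $j=2,\dots,r$ become the partial sums $M'_{r-j+1}$ of the reversed composition, so $\prod_j(n-M_{j-1})=n\cdot M'_1\cdots M'_{r-1}$, which relabels to the desired identity.

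Combining these steps gives $\sum_{\sigma\in C_\rho}\prod_i a_{i,\sigma(i)}=(n!/z_\rho)s_\rho=|C_\rho|s_\rho$, and so
\begin{equation*}
|s|^{(\lambda)}=\sum_\sigma\chi^\lambda(\sigma)\prod_i a_{i,\sigma(i)}=\sum_\rho|C_\rho|\chi^\lambda_\rho s_\rho.
\end{equation*}
Dividing by $n!$ delivers the first identity of the lemma. The second identity follows at once from column orthogonality of the character table of $S_n$: multiplying the first identity by $\chi^\lambda_\mu$, summing over $\lambda$, and using $\sum_\lambda\chi^\lambda_\mu\chi^\lambda_\rho=z_\rho\delta_{\mu\rho}$ collapses the double sum to $s_\mu$, as required.

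The main obstacle is the combinatorial identity in the telescoping step, which I handle by the Feller/Chinese-restaurant construction of a uniform random permutation; alternatively one can argue by induction on $r$, or extract it from the exponential cycle-index generating function $\sum_{n,\rho}(s_\rho/z_\rho)t^n=\exp(\sum_{k\ge 1}s_kt^k/k)$.
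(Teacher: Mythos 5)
Your proof is correct, and it is worth noting that the paper itself offers no argument for this lemma at all --- it is quoted verbatim from Littlewood (6.2 of \cite{Littlewood}), so any comparison is with the classical literature rather than with the paper. Your route is a direct, self-contained expansion of the immanant: the observation that the Hessenberg structure of $s$ forces every contributing permutation to have cycles that are consecutive intervals is right, the telescoping of $\prod_j (M_j-1)!/M_{j-1}!$ to $(n-1)!/(M_1\cdots M_{r-1})$ is right, and the composition identity $\sum 1/(M_1\cdots M_{r-1})=n/z_\rho$ (with $z_\rho=\prod_i i^{r_i}r_i!$ the centralizer order, which is what the lemma must mean despite the paper's mislabelling of $C_\lambda$ versus $z_\lambda$) checks out, e.g.\ for $\rho=(1,2)$ both sides equal $3/2$; your reversal argument matching it against the uniform cycle-discovery distribution is a clean way to get it. The deduction of the second identity from column orthogonality is exactly the standard inversion and is fine. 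Two small points: the ``in particular'' clause still requires the classical Newton--Girard expansions $h_n=\sum_\rho s_\rho/z_\rho$ and $e_n=\sum_\rho \varepsilon_\rho s_\rho/z_\rho$ to identify $\{(n)\}$ and $\{(1^n)\}$, which your argument does not supply (though for $\lambda=(1^n)$ your immanant computation literally reproves the determinantal Newton identity, so it is implicit there); and you should say explicitly that the contributing permutations of cycle type $\rho$ are in bijection with the \emph{distinct} orderings of the multiset $\rho$, so that no overcounting occurs when $\rho$ has repeated parts. Neither affects correctness.
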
 
    The following special case of Schur functions explicitly computable by the Jacobi-Trudi equation (see 6.3 of \cite{Littlewood}) is crucial to our calculation of $H(K[V\oplus V]^{S_n},t)$.
    \begin{lem}[7.1 of \cite{Littlewood}]\label{lem-Schur function on q}
    	For any partition $\lambda=\{\lambda_1,\lambda_2,\cdots,\lambda_l\}$ of $n$ with $\lambda_1\geq \lambda_2\geq\dots\geq \lambda_l$ ($l\leq n$), let $n_\lambda=\lambda_2+2\lambda_3+\cdots+(l-1)\lambda_l$ and $\{\lambda:t\}$ be the Schur function on $\{t^m,m\in\Z\}$ corresponding to $\lambda$, then 
    	\[\{\lambda:t\}=t^{n_\lambda}\dfrac{\prod_{1\leq r<s\leq l}(1-t^{\lambda_r-\lambda_s-r+s})}{\prod_{r=1}^{l}\phi_{\lambda_r+l-r}(t)},\]
    	where $\phi_k(t)=(1-t)\cdots(1-t^k)$ for any positive integer $k$.
    \end{lem}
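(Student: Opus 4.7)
The plan is to derive the formula from the Jacobi-Trudi identity, as already flagged in the excerpt. That identity expresses
\[\{\lambda\}=\det\bigl(h_{\lambda_r-r+s}\bigr)_{1\leq r,s\leq l},\]
with the conventions $h_0=1$ and $h_k=0$ for $k<0$, reducing the lemma to an explicit evaluation of this determinant on the geometric alphabet $\{1,t,t^2,\dots\}$.

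First I would evaluate $h_k$ on this alphabet. The generating identity $\sum_k h_k(\alpha)x^k=\prod_{m\geq 0}(1-\alpha_m x)^{-1}$, specialized at $\alpha_m=t^m$, gives the classical Euler-type product expansion
\[h_k(1,t,t^2,\dots)=\frac{1}{\phi_k(t)},\]
so that the Jacobi-Trudi determinant collapses to $\det\bigl(1/\phi_{\lambda_r-r+s}(t)\bigr)_{1\leq r,s\leq l}$.

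Next I would clear denominators row by row. In the $r$-th row the largest denominator is $\phi_{\lambda_r+l-r}(t)$; pulling this out as a common factor of the row and multiplying the $(r,s)$ entry by $\phi_{\lambda_r+l-r}(t)/\phi_{\lambda_r-r+s}(t)=(1-t^{\lambda_r-r+s+1})(1-t^{\lambda_r-r+s+2})\cdots(1-t^{\lambda_r+l-r})$ turns every entry into a polynomial in $t$. Collecting the row factors reproduces the desired denominator $\prod_{r=1}^l\phi_{\lambda_r+l-r}(t)$, and the task reduces to showing that the leftover polynomial determinant equals $t^{n_\lambda}\prod_{r<s}(1-t^{\lambda_r-\lambda_s-r+s})$.

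The hard part is exactly this residual determinant evaluation and the accompanying $t$-exponent bookkeeping. I would recognize the residual determinant, after a short column operation, as a $t$-analogue of a Vandermonde determinant in the parameters $x_r:=t^{\lambda_r+l-r}$, whose classical factorization produces $\prod_{r<s}(x_s-x_r)$ up to sign; extracting the monomial factors $t^{\lambda_s+l-s}$ from each term $x_s-x_r$ then delivers both the overall power $t^{n_\lambda}$ and the numerator $\prod_{r<s}(1-t^{\lambda_r-\lambda_s-r+s})$. An alternative, arguably cleaner route is to bypass Jacobi-Trudi altogether and start from the bialternant definition $\{\lambda\}=a_{\lambda+\delta}/a_\delta$ with $\delta=(l-1,l-2,\dots,0)$; on the alphabet $\{1,t,t^2,\dots\}$ both numerator and denominator are literal Vandermonde-like alternants in powers of $t$, and the claimed closed form drops out by direct factorization. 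I expect the bialternant route to be the most economical way to organize the bookkeeping, since it avoids the intermediate polynomial-matrix manipulations entirely.
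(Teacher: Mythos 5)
The paper does not prove this lemma at all: it is imported verbatim as Theorem 7.1 of Littlewood \cite{Littlewood}, so there is no internal argument to compare against. Your sketch is a correct and standard route to the principal specialization $\{\lambda:t\}=s_\lambda(1,t,t^2,\dots)$, and either of your two variants can be completed. Two points need more care than your write-up suggests. First, in the Jacobi--Trudi route the residual determinant $\det\bigl(\prod_{j=\lambda_r-r+s+1}^{\lambda_r+l-r}(1-t^j)\bigr)_{r,s}$ is, as you say, of the form $\det(q_s(y_r))$ with $y_r=t^{\lambda_r-r}$ and $\deg q_s=l-s$, hence equals $\prod_s c_s\cdot\prod_{r<r'}(y_r-y_{r'})$ where $c_s=(-1)^{l-s}t^{(s+1)+\cdots+l}$ is the leading coefficient of $q_s$; extracting $t^{\lambda_s-s}$ from each Vandermonde factor yields $t^{n_\lambda-\sum_s s(s-1)}$, and only after multiplying by $\prod_s c_s$ (which contributes $t^{+\sum_s s(s-1)}$ and cancels the sign $(-1)^{\binom{l}{2}}$) do you land on exactly $t^{n_\lambda}\prod_{r<s}(1-t^{\lambda_r-\lambda_s-r+s})$. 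So the exponent $t^{n_\lambda}$ does not come from the Vandermonde factors alone, contrary to what your phrasing implies, although the totals do balance. Second, the bialternant $a_{\lambda+\delta}/a_\delta$ is not literally defined on the infinite alphabet $\{1,t,t^2,\dots\}$; you must work with $N$ variables $1,t,\dots,t^{N-1}$, obtain $s_\lambda(1,\dots,t^{N-1})=t^{n_\lambda}\prod_{j<k}(1-t^{\mu_j-\mu_k})/(1-t^{k-j})$ with $\mu_j=\lambda_j+N-j$, and then let $N\to\infty$ as formal power series, reorganizing the surviving factors into $\prod_{r=1}^l\phi_{\lambda_r+l-r}(t)^{-1}$. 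Finally, note that the paper defines $\{\lambda\}$ via the immanant, i.e.\ the power-sum expansion $\sum_\rho z_\rho^{-1}\chi^\lambda_\rho s_\rho$, so your starting point (Jacobi--Trudi or the bialternant) tacitly uses the classical equivalence of these definitions; the paper itself leans on Littlewood 6.3 for that, so this is consistent, but it should be stated as an ingredient rather than left implicit.
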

    
    Now we give a straightforward calculation of $H(K[V]^{S_n},t)$ using Schur functions. First note that for any partition $\lambda=(1^{r_1}\cdots k^{r_k})$ of $n$, the corresponding conjugacy class has size $C_\lambda=1^{r_1}r_1!\cdots k^{r_k}r_k!$. Then by Lemma \ref{lem-characteristic polynomial of permutations} and Molien's formula (Proposition \ref{prop-Molien's formula}), we have
    \begin{align}\label{equation-Hilbert series of Sn}H(K[V]^{S_n},t)&=\dfrac{1}{n!}\sum_{\sigma\in S_n}\dfrac{1}{\det_V^0(1-t\sigma)}=\dfrac{1}{n!}\sum_{|\lambda|=n}\dfrac{C_\lambda}{(1-t)^{r_1}\cdots(1-t^k)^{r_k}}\\
    &=\sum_{|\lambda|=n}\dfrac{1}{z_\lambda}\dfrac{1}{(1-t)^{r_1}\cdots(1-t^k)^{r_k}}.\notag
    \end{align}
    Note that $\chi^{(n)}_\lambda=1$ for any partition $\lambda$ of $n$, and for the infinite series of variables $t^m, m\geq 0$, $s_\lambda=\dfrac{1}{(1-t)^{r_1}\cdots(1-t^k)^{r_k}}$. Hence by Lemma \ref{lem-Schur function identities} and Lemma \ref{lem-Schur function on q}, we have
    \[H(K[V]^{S_n},t)=\{(n)\}(1,t,t^2,\dots)=\dfrac{1}{\phi_n(t)}=\dfrac{1}{(1-t)(1-t^2)\cdots(1-t^n)}.\]
    We can also directly verify the above equality without using Lemma \ref{lem-Schur function on q}. We write
    \[\sum_{|\lambda|=n}\dfrac{1}{z_\lambda}\dfrac{1}{(1-t)^{r_1}\cdots(1-t^k)^{r_k}}=\sum_{u_i\geq 0, \sum_{i\geq 0}u_i=n}\prod_{i\geq 0}t^{iu_i}.\]
    Each term in the summation uniquely corresponds to a partition $(1^{u_1}\cdots i^{u_i}\cdots)$ with $\sum_{i\geq 1}u_i\leq n$ parts, which then uniquely corresponds to its conjugate partition. Rearranging the summation over conjugate partitions enables us to write (\ref{equation-Hilbert series of Sn}) as
    \begin{align*}H(K[V]^{S_n},t)&=\sum_{k_1,\cdots,k_n\in\Z}\prod_{j=1}^nt^{jk_j}=\prod_{j=1}^n(1+t^j+t^{2j}+\cdots)\\
    &=\dfrac{1}{(1-t)(1-t^2)\cdots(1-t^n)}.
    \end{align*}
    \begin{remark}
    The above computation just manifests the fundamental theorem of symmetric functions that $K[V]^{S_n}$ is a polynomial ring in the elementary symmetric polynomials. Moreover, its coincidence with the generating function of numbers of partitions with at most $n$ parts corresponds to the fact that $K[V]_d^{S_n}$ for each $d\geq 0$ has Schur polynomials $\{\lambda\}(x_1,\dots,x_n)$ of degree $d$ as a linear basis, noting that $\{\lambda\}=0$ if the partition $\lambda$ has more than $n$ parts by the Jacobi-Trudi identity. 
    \end{remark}
    To compute $H(K[V\oplus V]^{S_n},t)$, we introduce a method based on orthogonality of characters as follows. By Corollary \ref{cor-Hilbert series decomposition} and Lemma \ref{lem-characteristic polynomial of permutations}, we have
    \begin{equation}\label{equation-Hilbert series of product}H(K[V\oplus V]^{S_n},t)=\sum_{|\rho|=n}\dfrac{1}{z_\rho}\dfrac{1}{\det_V^0(1-t\sigma)^2}=\sum_{|\rho|=n}\dfrac{1}{z_\rho}s_{\rho+\rho}(1,t,t^2,\dots).\end{equation}
    Here for any two partitions $\mu=(1^{r_{1}}\cdots n^{r_{n}})$ and $\nu=(1^{s_{1}}\cdots n^{s_{n}})$ of $n$, we denote by $\mu+\nu$ the partition $(1^{r_{1}+s_{1}}\cdots n^{r_{n}+s_{n}})$ of $2n$. With this notation, we immediately get $s_{\mu}s_{\nu}=s_{\mu+\nu}$. Moreover, by Lemma \ref{lem-Schur function identities}, we compute for any partition $\lambda$ of $n$
    \[\{\lambda:t\}^2=\sum_{|\mu|=|\nu|=n}\dfrac{1}{z_{\mu}z_{\nu}}\chi^{\lambda}_\mu\chi^\lambda_\nu s_{\mu+\nu},\]
    and
    \begin{equation}\label{equation-square Schur function}\sum_{|\lambda|=n}\{\lambda:t\}^2=\sum_{|\lambda|=n}\sum_{|\mu|=|\nu|=n}\dfrac{1}{z_{\mu}z_{\nu}}\chi^{\lambda}_\mu\chi^\lambda_\nu s_{\mu+\nu}=\sum_{|\mu|=|\nu|=n}\dfrac{1}{z_{\mu}z_{\nu}}\left(\sum_{|\lambda|=n}\chi^{\lambda}_\mu\chi^\lambda_\nu\right)s_{\mu+\nu}.\end{equation}
    The above summation can be dramatically simplified by the following orthogonality of characters.
    \begin{lem}\label{lem-orthogonality}
    	For any finite group $G$, let $g,h\in G$ and $Z_g$ denote the centralizer of $g$ in $G$. Then
    	\[\sum_{\chi\in Irr(G)}\chi(g)\bar{\chi(h)}=\begin{cases}
    	|Z_g| & \text{ if } g \text{ is conjugate to }h\\
    	0, & \text{ otherwise}.
    	\end{cases},\]
    	where $Irr(G)$ denotes the set of irreducible characters of $G$.
    \end{lem}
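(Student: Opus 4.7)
The plan is to deduce this ``column'' or ``second'' orthogonality relation from the more familiar row orthogonality of irreducible characters, which I will take as a known consequence of Schur's lemma. Namely, I would start from the first orthogonality relation
\[\sum_{x \in G} \chi(x) \overline{\psi(x)} = |G|\, \delta_{\chi, \psi}, \qquad \chi, \psi \in \Irr(G).\]
Grouping the left-hand sum by conjugacy classes, and letting $C$ range over the conjugacy classes of $G$ with chosen representatives $g_C$, this can be rewritten as
\[\sum_C \frac{|C|}{|G|}\, \chi(g_C) \overline{\psi(g_C)} = \delta_{\chi, \psi}.\]

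Next, I would assemble the (normalized) character table as a matrix $M$ whose rows are indexed by $\Irr(G)$ and whose columns are indexed by the conjugacy classes, with entries $M_{\chi, C} = \sqrt{|C|/|G|}\, \chi(g_C)$. The relation above says precisely that $M M^{*} = I$, i.e.\ the rows of $M$ are orthonormal with respect to the standard Hermitian inner product. The crucial structural input is that the number of irreducible complex characters of $G$ equals the number of conjugacy classes of $G$ (both being the dimension of the space of class functions on $G$), so $M$ is a square matrix. A square matrix with orthonormal rows is unitary, hence $M^{*} M = I$ as well, which is the statement that the columns of $M$ are orthonormal.

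Finally, writing out this column orthogonality for conjugacy class representatives $g, h$ gives
\[\sum_{\chi \in \Irr(G)} \frac{\sqrt{|C_g|\, |C_h|}}{|G|}\, \chi(g) \overline{\chi(h)} = \delta_{C_g, C_h},\]
where $C_g$ denotes the conjugacy class of $g$. Rearranging, using $|Z_g| = |G|/|C_g|$, and noting that the left-hand side is obviously zero when $C_g \neq C_h$, produces the stated formula. The main obstacle is essentially nonexistent: this is the classical column orthogonality of the character table, and once the first orthogonality relation is expressed in the unitary form above, both cases of the lemma follow in a single line. The only ingredient I would simply cite (rather than prove) is the equality between the number of irreducible characters and the number of conjugacy classes, which is standard.
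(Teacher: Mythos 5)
Your argument is correct: it is the classical derivation of column (second) orthogonality from row (first) orthogonality, by observing that the normalized character table $M$ with entries $\sqrt{|C|/|G|}\,\chi(g_C)$ is a square matrix with orthonormal rows, hence unitary, hence has orthonormal columns; unwinding $M^{*}M=I$ and using $|Z_g|=|G|/|C_g|$ gives exactly the stated formula (and since characters are class functions, restricting to class representatives loses nothing). The paper itself offers no argument here --- it simply cites Theorem 3.9 of Etingof et al.\ \cite{Etingof} --- so your proposal supplies a complete proof where the paper defers to the literature. The two external facts you invoke, the first orthogonality relation and the equality between the number of irreducible characters and the number of conjugacy classes, are standard and appropriate to cite. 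One trivial wording slip: in your final paragraph it is the right-hand side $\delta_{C_g,C_h}$ that is zero when the classes differ, which then forces the left-hand sum to vanish because the prefactor $\sqrt{|C_g|\,|C_h|}/|G|$ is nonzero; this does not affect the validity of the argument.
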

    \begin{proof}
    	For a proof, see \cite[Theorem 3.9]{Etingof}.
    \end{proof}
    Hence (\ref{equation-square Schur function}) and (\ref{equation-Hilbert series of product}) become equal:
    \[\sum_{|\lambda|=n}\{\lambda:t\}^2=\sum_{|\mu|=n}\dfrac{1}{z_{\mu}z_{\mu}}z_\mu s_{\mu+\mu}=H(K[V\oplus V]^{S_n},t).\]
    We summarize the above as follows.
    \begin{prop}\label{prop-Hilbert series of product}
    	Let $V\simeq K^n$ be the standard representation of $S_n$ with $S_n$ permuting a standard basis of $V$, for any field $K$ of characteristic zero, and $\{\lambda:t\}$ the Schur function on $\{t^m, m\in\Z\}$ for any partition $\lambda$ of $n$. Then
    	\[H(K[V\oplus V]^{S_n},t)=\sum_{|\lambda|=n}\{\lambda:t\}^2.\]
    \end{prop}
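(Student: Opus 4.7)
The plan is to compute $H(K[V\oplus V]^{S_n},t)$ via Molien's formula, rewrite the summands as Schur functions on the infinite variable set $\{1,t,t^2,\dots\}$, and then collapse the resulting double sum using character orthogonality. The starting point is equation (\ref{equation-Hilbert series of product}) from the excerpt, which, combining Corollary \ref{cor-Hilbert series decomposition} with Lemma \ref{lem-characteristic polynomial of permutations}, gives
\[
H(K[V\oplus V]^{S_n},t)=\sum_{|\rho|=n}\dfrac{1}{z_\rho}\,s_{\rho+\rho}(1,t,t^2,\dots),
\]
where $s_{\mu+\nu}=s_\mu s_\nu$ for the combined-multiplicity notation on partitions introduced just before Lemma \ref{lem-orthogonality}.

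Next I would turn the proposed right-hand side $\sum_{|\lambda|=n}\{\lambda:t\}^2$ into the same kind of sum. By Lemma \ref{lem-Schur function identities} applied twice,
\[
\{\lambda:t\}^2=\sum_{|\mu|=|\nu|=n}\dfrac{1}{z_\mu z_\nu}\chi^\lambda_\mu\chi^\lambda_\nu\, s_{\mu+\nu}.
\]
Summing over $\lambda$ and interchanging the order of summation yields
\[
\sum_{|\lambda|=n}\{\lambda:t\}^2=\sum_{|\mu|=|\nu|=n}\dfrac{1}{z_\mu z_\nu}\Bigl(\sum_{|\lambda|=n}\chi^\lambda_\mu\chi^\lambda_\nu\Bigr)s_{\mu+\nu}.
\]

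The final step is to evaluate the inner character sum by the second orthogonality relation for $S_n$. Since irreducible characters of $S_n$ are real-valued, Lemma \ref{lem-orthogonality} gives $\sum_{|\lambda|=n}\chi^\lambda_\mu\chi^\lambda_\nu=z_\mu$ when $\mu=\nu$ and $0$ otherwise. Plugging this in collapses the double sum to the diagonal,
\[
\sum_{|\lambda|=n}\{\lambda:t\}^2=\sum_{|\mu|=n}\dfrac{z_\mu}{z_\mu z_\mu}\,s_{\mu+\mu}=\sum_{|\mu|=n}\dfrac{1}{z_\mu}\,s_{\mu+\mu}(1,t,t^2,\dots),
\]
which agrees with the expression for $H(K[V\oplus V]^{S_n},t)$ obtained above.

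I do not anticipate any serious obstacle: every ingredient (Molien, the expansion of Schur functions in the power-sum basis, and second-orthogonality of irreducible characters) is already stated in the excerpt, and the argument is just a bookkeeping exercise. The only mildly delicate point is the use of symmetric-function identities in infinitely many variables $\{1,t,t^2,\dots\}$, but this is safe because everything is viewed as a formal power series in $t$ and each coefficient depends on only finitely many of the $t^i$, so the interchange of summations and the identity $s_\mu s_\nu=s_{\mu+\nu}$ hold termwise.
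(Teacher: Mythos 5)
Your proposal is correct and follows essentially the same route as the paper: Molien's formula combined with Lemma \ref{lem-characteristic polynomial of permutations} to get the power-sum expression, expansion of $\{\lambda:t\}^2$ via Lemma \ref{lem-Schur function identities}, and collapse of the double sum by the second orthogonality relation of Lemma \ref{lem-orthogonality}. The remark that the characters of $S_n$ are real-valued, and the note about formal-power-series convergence in the infinite variable set, are small but welcome clarifications of points the paper leaves implicit.
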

    	We can generalize the result to arbitrary finite groups using the above argument based on orthogonality of characters as of Lemma \ref{lem-orthogonality}.  	
    	\begin{cor}\label{cor-general Schur function}
    		Let $G$ be a finite group and $W$ a finite dimensional representation of $G$ over a field of characteristic zero. For any irreducible character $\chi$ of $G$, define an analogue of the Schur function by
    		\[S_\chi^W(t):=\dfrac{1}{|G|}\sum_{g\in G}\dfrac{\chi(g)}{\det^0_W(1-tg)}.\]
    		Then
    		\[H(K[W\oplus W]^G, t)=\sum_{\chi} S_\chi^W(t)^2,\]
    		where the summation is over all irreducible characters of $G$. 
    	\end{cor}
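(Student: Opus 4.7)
The plan is to mimic the proof strategy of Proposition \ref{prop-Hilbert series of product}, replacing the Schur-function identities with the abstract orthogonality of characters that underlies them.

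First, I would apply Molien's formula (Proposition \ref{prop-Molien's formula}) to $W\oplus W$. Because every eigenvalue of $g$ on $W$ appears with doubled multiplicity on $W\oplus W$, this gives
\[
H(K[W\oplus W]^G,t)=\frac{1}{|G|}\sum_{g\in G}\frac{1}{\det^0_W(1-tg)^2}.
\]
The goal is then to show that $\sum_\chi S_\chi^W(t)^2$ collapses to exactly this expression.

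Next, I would expand the squared sum directly and interchange the order of summation:
\[
\sum_\chi S_\chi^W(t)^2=\frac{1}{|G|^2}\sum_{g,h\in G}\frac{\sum_{\chi}\chi(g)\chi(h)}{\det^0_W(1-tg)\det^0_W(1-th)}.
\]
Here the interior sum $\sum_\chi \chi(g)\chi(h)$ over irreducible characters is precisely the quantity governed by Lemma \ref{lem-orthogonality}: it vanishes off a single conjugacy relation and equals $|Z_g|$ when that relation holds. Combined with the fact that $\det^0_W(1-tg)$ depends only on the conjugacy class of $g$, and that a class contains exactly $|G|/|Z_g|$ elements, the double sum telescopes to
\[
\frac{1}{|G|^2}\sum_{g}\frac{|Z_g|\cdot(|G|/|Z_g|)}{\det^0_W(1-tg)^2}=\frac{1}{|G|}\sum_{g}\frac{1}{\det^0_W(1-tg)^2},
\]
which matches the Molien expression from the first step. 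This is exactly the orthogonality step that replaces the identity $\sum_{|\lambda|=n}\chi^\lambda_\mu\chi^\lambda_\nu=\delta_{\mu,\nu}z_\mu$ used in the proof of Proposition \ref{prop-Hilbert series of product}.

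The main subtlety, which I expect to be the chief obstacle, is that Lemma \ref{lem-orthogonality} as stated involves $\chi(g)\overline{\chi(h)}$ rather than $\chi(g)\chi(h)$. For symmetric groups all characters are real-valued, so the two sums coincide; in general one must rewrite $\chi(h)=\overline{\chi(h^{-1})}$, which turns the matching condition ``$g\sim h$'' into ``$g\sim h^{-1}$'' and replaces $\det^0_W(1-th)$ by $\det^0_W(1-tg^{-1})$. Because the eigenvalues of any $g$ of finite order acting on a permutation-type (i.e.\ self-dual) representation form a multiset closed under inversion, one has $\det^0_W(1-tg)=\det^0_W(1-tg^{-1})$, and the argument goes through verbatim; more generally, the cleanest formulation is $H(K[W\oplus W]^G,t)=\sum_\chi S_\chi^W(t)\,S_{\bar\chi}^W(t)$, which reduces to the stated formula whenever the characters of $G$ are real. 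I would therefore present the proof in the orthogonality-based form sketched above, pointing out this identification in the real-character case to recover the statement as written.
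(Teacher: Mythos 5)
Your argument is the same one the paper intends: the paper's ``proof'' of this corollary is the single remark that the orthogonality argument used for Proposition \ref{prop-Hilbert series of product} generalizes, and your expansion of $\sum_\chi S_\chi^W(t)^2$ into a double sum over $G\times G$ followed by Lemma \ref{lem-orthogonality} is exactly that argument made explicit. More importantly, the ``subtlety'' you flag is a genuine correction rather than a technicality: orthogonality controls $\sum_\chi\chi(g)\overline{\chi(h)}$, not $\sum_\chi\chi(g)\chi(h)$, and without a reality or self-duality hypothesis the corollary as stated is actually false. For instance, take $G=\Z/3\Z$ and $W$ the one-dimensional representation on which a generator acts by a primitive cube root of unity $\omega$. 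Then
\[
H(K[W\oplus W]^G,t)=\frac{1}{3}\sum_{a=0}^{2}\frac{1}{(1-\omega^a t)^2}=1+4t^3+7t^6+\cdots
\]
has vanishing $t^2$-coefficient, whereas
\[
\sum_\chi S_\chi^W(t)^2=\frac{1}{3}\left(\frac{1}{(1-t)^2}+\frac{2}{1+t+t^2}\right)
\]
has $t^2$-coefficient equal to $1$. Your corrected identity $H(K[W\oplus W]^G,t)=\sum_\chi S_\chi^W(t)\,S_{\overline{\chi}}^W(t)$, together with the observation that it reduces to the stated formula when all characters of $G$ are real (as for $S_n$, which is all the paper uses downstream) or when $W\cong W^*$, is the right statement, and your orthogonality computation proves it; I would only suggest stating the needed hypothesis explicitly rather than leaving it as a remark at the end.
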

    Computational evaluation of the Hilbert series $H(K[V\oplus V],t)$ can be obtained by Lemma \ref{lem-Schur function on q}. We can get another expression of it by further combining Proposition \ref{prop-Hilbert series of product} with the following Cauchy's identity:
    \begin{lem}[see Theorem 38.1 of Bump \cite{Bump}]\label{lem-Cauchy's identity}
    	For any $\alpha_1,\dots, \alpha_k\in\C$ and $\beta_1,\dots,\beta_l\in\C$ with $|\alpha_i|<1, |\beta_j|<1$, we have
    	\[\sum_{|\lambda|=n}\{\lambda\}(\alpha_1,\dots,\alpha_k)\{\lambda\}(\beta_1,\dots,\beta_l)=h_n(\alpha_i\beta_j),\]
    	the homogeneous product sums of degree $n$ on $\alpha_i\beta_j, 1\leq i\leq k, 1\leq j\leq l$.    	
    \end{lem}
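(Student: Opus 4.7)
The plan is to derive Cauchy's identity by pushing the Schur functions through their power-sum expansion, applying orthogonality of symmetric-group characters, and then recognizing the resulting sum as the classical power-sum expansion of $h_n$ in the product variables $\alpha_i\beta_j$. This is a purely algebraic (polynomial) identity for fixed $n$, so the convergence hypothesis $|\alpha_i|,|\beta_j|<1$ is only needed if one wants the product-form statement $\prod_{i,j}(1-\alpha_i\beta_j)^{-1}=\sum_n h_n(\alpha_i\beta_j)$; for the graded piece it is a formal identity.

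First I would use Lemma \ref{lem-Schur function identities} to write, for each partition $\lambda$ of $n$,
\[\{\lambda\}(\alpha)=\sum_{|\mu|=n}\dfrac{\chi^\lambda_\mu}{z_\mu}s_\mu(\alpha),\qquad \{\lambda\}(\beta)=\sum_{|\nu|=n}\dfrac{\chi^\lambda_\nu}{z_\nu}s_\nu(\beta).\]
Multiplying and summing over $|\lambda|=n$ gives
\[\sum_{|\lambda|=n}\{\lambda\}(\alpha)\{\lambda\}(\beta)=\sum_{|\mu|=|\nu|=n}\dfrac{s_\mu(\alpha)s_\nu(\beta)}{z_\mu z_\nu}\left(\sum_{|\lambda|=n}\chi^\lambda_\mu\chi^\lambda_\nu\right).\]

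Next I would invoke Lemma \ref{lem-orthogonality} applied to $G=S_n$: the inner bracket equals $z_\mu$ if $\mu=\nu$ and $0$ otherwise (the characters $\chi^\lambda$ are real-valued for $S_n$, so conjugation is harmless). This collapses the double sum to the diagonal
\[\sum_{|\lambda|=n}\{\lambda\}(\alpha)\{\lambda\}(\beta)=\sum_{|\mu|=n}\dfrac{s_\mu(\alpha)s_\mu(\beta)}{z_\mu}.\]

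Finally, I would use the multiplicativity of power sums on product variables: for every $k\geq 1$,
\[s_k(\alpha_i\beta_j)=\sum_{i,j}(\alpha_i\beta_j)^k=\Bigl(\sum_i\alpha_i^k\Bigr)\Bigl(\sum_j\beta_j^k\Bigr)=s_k(\alpha)s_k(\beta),\]
so $s_\mu(\alpha)s_\mu(\beta)=s_\mu(\alpha_i\beta_j)$ in the variables indexed by pairs $(i,j)$. Specializing Lemma \ref{lem-Schur function identities} to the trivial character $\chi^{(n)}\equiv 1$ gives the classical identity $h_n=\sum_{|\mu|=n}s_\mu/z_\mu$, which applied in the variables $\alpha_i\beta_j$ yields exactly $h_n(\alpha_i\beta_j)$ and completes the proof.

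The computation is essentially bookkeeping once the three ingredients are in place; the only mild obstacle is notational, namely keeping straight that $s_\mu$ denotes a product of power sums (not a Schur function), and that Lemma \ref{lem-orthogonality} is being applied columnwise on the character table of $S_n$ rather than rowwise. No new combinatorial input is needed beyond what the paper already develops.
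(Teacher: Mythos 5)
Your proof is correct and complete. The paper itself does not prove this lemma---it simply cites Theorem 38.1 of Bump \cite{Bump}---so there is no in-paper argument to compare against; but your derivation is the standard one, and each ingredient is already available in the paper: the power-sum expansion of $\{\lambda\}$ is Lemma \ref{lem-Schur function identities}, the column orthogonality $\sum_{|\lambda|=n}\chi^\lambda_\mu\chi^\lambda_\nu=z_\mu\delta_{\mu\nu}$ is Lemma \ref{lem-orthogonality} applied to $S_n$ (with the correct observation that the characters are real), the multiplicativity $s_k(\alpha_i\beta_j)=s_k(\alpha)s_k(\beta)$ is immediate, and $h_n=\sum_{|\mu|=n}s_\mu/z_\mu$ is the $\lambda=(n)$ case of Lemma \ref{lem-Schur function identities}. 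It is worth noting that your argument is precisely the same orthogonality collapse the author performs in deriving Proposition \ref{prop-Hilbert series of product} from equations (\ref{equation-Hilbert series of product}) and (\ref{equation-square Schur function}); in effect you have shown that Proposition \ref{prop-Hilbert series of product} is the specialization $\alpha_i=\beta_i=t^{i}$ of Cauchy's identity, which makes the paper's subsequent combination of the two results less of a coincidence. Your remark that the hypothesis $|\alpha_i|,|\beta_j|<1$ is irrelevant for the fixed-degree polynomial identity is also accurate. The only caution is notational, which you already flag: $s_\mu$ here is the product of power sums $s_{\mu_1}s_{\mu_2}\cdots$, not a Schur function.
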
 
    Using infinite series of variables $\alpha_i=t^i, \beta_j=t^j, i,j\geq 0$, and noting that $t^k$ occurs $k+1$ times for each $k\geq 0$, we immediately get the generating function
    \[\sum_{k\geq 0}h_{k}(\alpha_i\beta_j)x^k=\prod_{i,j\geq 0}(1+\alpha_i\beta_jx+(\alpha_i\beta_jx)^2+\cdots)=\prod_{k\geq 0}(1-t^kx)^{-(k+1)}.\] 
    Further by Carlitz \cite[(5.2)]{Carlitz} or Garsia-Gessel \cite[Theorem 2.3]{Garsia-Gessel}, we have
    \begin{lem}
    \[\prod_{k\geq 0}(1-t^kx)^{-(k+1)}=\prod_{k\geq 0}(1-t^{k+1}(x/t))^{-(k+1)}=\sum_{n\geq 0}\dfrac{x^nf_n(t)}{(1-t)^2\cdots(1-t^n)^2},\]
    where $f_n(t)=\sum_{\sigma\in S_n}t^{m(\sigma)+m(\sigma^{-1})}$ and $m(\sigma)$ is the \textit{major index} of $\sigma$, denoting the sum of all $i$ such that $\sigma(i)>\sigma(i+1)$. 
    \end{lem}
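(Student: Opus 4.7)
The plan is to reduce the claim to a classical $q$-series identity via Cauchy's identity and the RSK correspondence. The first equality is the trivial substitution $t^k(x/t) = t^{k-1}x$ followed by reindexing, so the real content is the second equality.

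First, I would rewrite the left-hand side as a doubly indexed product
\[
\prod_{k \geq 0}(1 - t^k x)^{-(k+1)} = \prod_{i, j \geq 0}(1 - t^{i+j} x)^{-1},
\]
using the fact that $t^k = t^i\cdot t^j$ has exactly $k+1$ solutions with $i, j \geq 0$. Applying Cauchy's identity (Lemma \ref{lem-Cauchy's identity}) with the specializations $\alpha_i = t^i$ and $\beta_j = x t^j$, and using homogeneity of Schur functions to pull out the factor $x^{|\lambda|}$, I get
\[
\prod_{i, j \geq 0}(1 - t^{i+j} x)^{-1} = \sum_{n \geq 0} x^n \sum_{|\lambda| = n} \{\lambda:t\}^2.
\]

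Next, the task is to prove the finite-degree identity
\[
\sum_{|\lambda| = n} \{\lambda:t\}^2 = \frac{f_n(t)}{(1-t)^2(1-t^2)^2\cdots(1-t^n)^2}.
\]
The standard principal specialization identity, which can be recovered from the explicit form in Lemma \ref{lem-Schur function on q}, reads
\[
\{\lambda:t\} = \frac{\sum_{T \in \mathrm{SYT}(\lambda)} t^{m(T)}}{(1-t)(1-t^2)\cdots(1-t^n)},
\]
where $m(T)$ denotes the major index of the standard Young tableau $T$. Substituting this into the sum, the denominators pool uniformly to $\prod_{k=1}^{n}(1-t^k)^2$, and the problem reduces to showing
\[
\sum_{|\lambda| = n}\left(\sum_{T \in \mathrm{SYT}(\lambda)} t^{m(T)}\right)^2 = \sum_{\sigma \in S_n} t^{m(\sigma) + m(\sigma^{-1})}.
\]
This last identity follows from the Robinson-Schensted-Knuth bijection, which sends each $\sigma \in S_n$ to a pair $(P, Q)$ of standard Young tableaux of a common shape $\lambda \vdash n$, together with the compatibility relations $m(P) = m(\sigma^{-1})$ and $m(Q) = m(\sigma)$.

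The main obstacle is the very last step: the compatibility of RSK with the major-index statistic. This is the nontrivial combinatorial input, established by Carlitz and proved bijectively by Garsia-Gessel; everything preceding it in the chain is formal manipulation of generating functions plus the Cauchy identity.
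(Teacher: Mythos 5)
Your argument is correct in outline, but it takes a genuinely different route from the paper: the paper does not prove this lemma at all, it imports it verbatim from Carlitz \cite[(5.2)]{Carlitz} and Garsia--Gessel \cite[Theorem 2.3]{Garsia-Gessel}, whereas you derive it from Cauchy's identity (Lemma \ref{lem-Cauchy's identity}) combined with the major-index specialization of Schur functions and the descent-set compatibility of RSK ($m(Q(\sigma))=m(\sigma)$ and $m(P(\sigma))=m(\sigma^{-1})$, the latter via the symmetry $\sigma^{-1}\mapsto(Q,P)$). Note that your route establishes the identity $f_n(t)=\phi_n^2(t)\sum_{|\lambda|=n}\{\lambda:t\}^2$ \emph{first} and reads the lemma off from the Cauchy product; in the paper the logical order is reversed, with the lemma as input and Corollary \ref{cor-an equality} as output. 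This is not circular, but if adopted it would turn Theorem \ref{thm-another expression of Hilbert series} and Corollary \ref{cor-an equality} into immediate restatements rather than consequences. The one soft spot is your claim that the specialization $\{\lambda:t\}=\phi_n(t)^{-1}\sum_{T\in\mathrm{SYT}(\lambda)}t^{m(T)}$ ``can be recovered from'' Lemma \ref{lem-Schur function on q}: passing from that hook-type product to the $\mathrm{maj}$ generating function over standard Young tableaux is itself a nontrivial classical identity of Stanley, not a formal consequence of the displayed formula, and should be cited alongside the RSK input you already acknowledge. What your approach buys is a self-contained combinatorial explanation of where $f_n$ comes from; what the paper's citation buys is brevity, at the cost of leaving the bijective content opaque.
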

    Hence together with Lemma \ref{lem-Cauchy's identity} we get
    \begin{thm}\label{thm-another expression of Hilbert series}
    	With notations above,
    	\[H(K[V\oplus V]^{S_n},t)=\dfrac{f_n(t)}{(1-t)^2\cdots(1-t^n)^2}.\]
    	
    \end{thm}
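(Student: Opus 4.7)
The plan is to chain together the three results immediately preceding the statement: Proposition \ref{prop-Hilbert series of product}, Cauchy's identity (Lemma \ref{lem-Cauchy's identity}), and the Carlitz--Garsia--Gessel expansion just stated. By Proposition \ref{prop-Hilbert series of product}, $H(K[V\oplus V]^{S_n},t)=\sum_{|\lambda|=n}\{\lambda:t\}^2$, which is already the $x^n$ coefficient of $\sum_{n\geq 0}\bigl(\sum_{|\lambda|=n}\{\lambda:t\}^2\bigr)x^n$. So it suffices to identify this generating function with $\prod_{k\geq 0}(1-t^k x)^{-(k+1)}$ and then invoke the Carlitz--Garsia--Gessel formula.

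First, I would specialize Cauchy's identity to the two infinite sequences $\alpha_i=t^i$ and $\beta_j=t^j$ for $i,j\geq 0$; formally this is legitimate because for $|t|<1$ and $|x|<1$ (or working in the formal power series ring $K[[t,x]]$) all the sums converge. Cauchy's identity then reads
\[
\sum_{|\lambda|=n}\{\lambda:t\}^2 \;=\; h_n\!\bigl(\{t^{i+j}\}_{i,j\geq 0}\bigr).
\]
Multiplying by $x^n$ and summing over $n\geq 0$, the standard generating-function identity for complete homogeneous symmetric functions gives
\[
\sum_{n\geq 0}\Bigl(\sum_{|\lambda|=n}\{\lambda:t\}^2\Bigr)x^n \;=\; \prod_{i,j\geq 0}\frac{1}{1-t^{i+j}x}.
\]

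Next I would reorganize the double product by grouping pairs $(i,j)$ with fixed sum $k=i+j$. Since there are exactly $k+1$ such pairs for each $k\geq 0$, the product collapses to
\[
\prod_{i,j\geq 0}\frac{1}{1-t^{i+j}x} \;=\; \prod_{k\geq 0}\frac{1}{(1-t^k x)^{k+1}}.
\]
At this point the Carlitz--Garsia--Gessel lemma cited just before the theorem provides an explicit expansion of the right-hand side as $\sum_{n\geq 0}\dfrac{x^n f_n(t)}{(1-t)^2\cdots(1-t^n)^2}$. Comparing coefficients of $x^n$ yields exactly
\[
H(K[V\oplus V]^{S_n},t) \;=\; \sum_{|\lambda|=n}\{\lambda:t\}^2 \;=\; \frac{f_n(t)}{(1-t)^2(1-t^2)^2\cdots(1-t^n)^2},
\]
which is the claimed identity.

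None of the steps should pose a real obstacle, since each relies on an identity already stated in the paper; the only care needed is the bookkeeping when passing from a double product indexed by $(i,j)$ to a single product indexed by $k=i+j$ with multiplicities, and the justification that the manipulations with infinite series of variables are valid (either as formal power series in $x$ with coefficients in $\Z[[t]]$, or analytically for $|t|,|x|<1$). The rest is simply assembly.
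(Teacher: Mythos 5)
Your proposal is correct and follows essentially the same route as the paper: Proposition \ref{prop-Hilbert series of product}, Cauchy's identity specialized to $\alpha_i=t^i$, $\beta_j=t^j$ with the $(k+1)$-fold multiplicity of $t^k$, and the Carlitz--Garsia--Gessel expansion, followed by comparison of the coefficients of $x^n$. Your explicit attention to the formal-power-series justification and the regrouping of the double product is a slightly more careful writeup of the same argument.
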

    Combining Proposition \ref{prop-Hilbert series of product} and Theorem \ref{thm-another expression of Hilbert series} implies the following
    \begin{cor}\label{cor-an equality}
    	With notations above, 
    	\[f_n(t)=\sum_{\sigma\in S_n}t^{m(\sigma)+m(\sigma^{-1})}=\phi_n^2(t)\sum_{|\lambda|=n}\{\lambda:t\}^2.\]
    \end{cor}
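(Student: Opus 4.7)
The plan is to observe that this corollary is an immediate consequence of the two equivalent formulas already established for $H(K[V\oplus V]^{S_n},t)$, so the proof amounts to equating them and clearing denominators. I will first recall that $\phi_n(t)=(1-t)(1-t^2)\cdots(1-t^n)$, so the denominator appearing in Theorem \ref{thm-another expression of Hilbert series} is precisely $\phi_n^2(t)$.

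Concretely, I would proceed as follows. First, invoke Proposition \ref{prop-Hilbert series of product} to write
\[H(K[V\oplus V]^{S_n},t)=\sum_{|\lambda|=n}\{\lambda:t\}^2.\]
Next, invoke Theorem \ref{thm-another expression of Hilbert series} to write the same Hilbert series as
\[H(K[V\oplus V]^{S_n},t)=\frac{f_n(t)}{(1-t)^2(1-t^2)^2\cdots(1-t^n)^2}=\frac{f_n(t)}{\phi_n^2(t)}.\]
Equating these two expressions and multiplying through by $\phi_n^2(t)$ gives the claimed identity $f_n(t)=\phi_n^2(t)\sum_{|\lambda|=n}\{\lambda:t\}^2$, while the first equality $f_n(t)=\sum_{\sigma\in S_n}t^{m(\sigma)+m(\sigma^{-1})}$ is just the definition of $f_n(t)$ quoted from the preceding lemma.

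There is essentially no obstacle here: the entire content has already been established. The one small bookkeeping point worth mentioning is that both sides should be interpreted as formal power series in $t$, so clearing the denominator $\phi_n^2(t)$ is legitimate (since $\phi_n(t)$ is a unit in $\Z[\![t]\!]$ up to the factor $1$ at $t=0$, and indeed $\phi_n(0)=1$, so $1/\phi_n^2(t)$ is a well-defined power series and multiplication is unambiguous). Thus the corollary follows as a one-line consequence of the two identities for the Hilbert series.
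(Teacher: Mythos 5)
Your proof is correct and matches the paper's own derivation: the corollary is obtained there exactly by combining Proposition \ref{prop-Hilbert series of product} with Theorem \ref{thm-another expression of Hilbert series} and clearing the denominator $\phi_n^2(t)$. Your remark that $\phi_n(0)=1$ makes the manipulation of formal power series legitimate is a nice, if unstated in the paper, bookkeeping detail.
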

    
    \begin{remark}
    	This equality is also established in Stanley \cite[Theorem 5.1]{Stanley} by combinatorial counting of plane partitions.
    \end{remark}

    The highest degree of secondary invariants of $K[V\oplus V]^{S_n}$ is now apparently $\deg(f_n)=m(\sigma)+m(\sigma)=n(n-1)$ for $\sigma=(1\ n)(2\ n-1)\cdots$, which was first computed in the older account by Carlitz \cite{Carlitz}. Since there are $|S_n|=n!$ terms in the summation of $f_n$, there are clearly $n!$ secondary invariants. We summarize it as 
    \begin{cor}\label{cor-highest degree of secondary}
    	The highest degree of the secondary invariants of $K[V\oplus V]^{S_n}$ is $n(n-1)$, and clearly the number of secondary invariants is $n!$.   	
    \end{cor}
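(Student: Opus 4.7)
The plan is to read off both claims from the Hilbert series expression of Theorem \ref{thm-another expression of Hilbert series} after matching it with the Hironaka decomposition formula of Proposition \ref{prop-Hilbert series-Hironka decomposition}. By Proposition \ref{prop-primary invariants} the primary invariants for $K[V\oplus V]^{S_n}$ can be taken to be $L_{1},\ldots,L_{n},R_{1},\ldots,R_{n}$ with $\deg L_{m}=\deg R_{m}=m$, so Proposition \ref{prop-Hilbert series-Hironka decomposition} gives
\[H(K[V\oplus V]^{S_n},t)=\frac{\sum_{j=1}^{s}t^{\deg(g_j)}}{\prod_{m=1}^{n}(1-t^m)^2}.\]
Comparing this denominator with the one in Theorem \ref{thm-another expression of Hilbert series}, which is $\phi_n(t)^2=\prod_{m=1}^{n}(1-t^m)^2$, the two numerators must agree, i.e.\ $\sum_{j=1}^{s}t^{\deg(g_j)}=f_n(t)$.

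Once this identification is made, both assertions become facts about the polynomial $f_n(t)=\sum_{\sigma\in S_n}t^{m(\sigma)+m(\sigma^{-1})}$. The number of secondary invariants equals the number of monomials (with multiplicity) in the numerator, which is $f_n(1)=|S_n|=n!$. The highest degree equals $\deg(f_n)=\max_{\sigma\in S_n}\bigl(m(\sigma)+m(\sigma^{-1})\bigr)$.

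For the degree computation I would first note the trivial bound $m(\sigma)\le 1+2+\cdots+(n-1)=\binom{n}{2}$, because the descent positions lie in $\{1,\ldots,n-1\}$, and equality holds iff every consecutive pair is a descent, i.e.\ iff $\sigma$ is the reverse permutation $\sigma_0=(1\ n)(2\ n\!-\!1)\cdots$. Since $\sigma_0$ is an involution, $m(\sigma_0^{-1})=m(\sigma_0)=\binom{n}{2}$, so $\sigma_0$ attains $m(\sigma)+m(\sigma^{-1})=n(n-1)$, and the additive bound $m(\sigma)+m(\sigma^{-1})\le 2\binom{n}{2}=n(n-1)$ shows this is maximal.

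There is no serious obstacle here: the only nonroutine point is matching the denominators in Theorem \ref{thm-another expression of Hilbert series} with the degrees of the primary invariants from Proposition \ref{prop-primary invariants}; everything else is immediate once one observes that $f_n(t)$ is then forced to be the generating polynomial of the secondary-invariant degrees, so its degree and its value at $t=1$ directly give the two stated quantities.
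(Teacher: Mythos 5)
Your proposal is correct and follows essentially the same route as the paper: identify the numerator of the Hilbert series in Theorem \ref{thm-another expression of Hilbert series} with the generating polynomial of secondary-invariant degrees via Proposition \ref{prop-Hilbert series-Hironka decomposition}, then read off $\deg(f_n)=n(n-1)$ from the reverse permutation and the count $f_n(1)=n!$. Your explicit justification that the maximum of $m(\sigma)+m(\sigma^{-1})$ is attained at the reverse permutation is a welcome detail the paper only asserts (citing Carlitz), but it is not a different argument.
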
 
    \begin{remark}
    	The highest degree of secondary invariants can also be calculated directly from Proposition \ref{prop-Hilbert series of product}. Actually, for $\lambda=(1^n)$, the Schur function $\{\lambda:t\}$ achieves the highest degree $-n$ by the formula of Lemma \ref{lem-Schur function on q}. Hence the highest degree of the secondary invariants is equal to $\deg(\phi^2_n(t))-2n=n(n-1)$.
    \end{remark}
    \begin{remark}
    The sequence OEIS A081285 (see \cite{OEIS}) records a list of coefficients of $f_n$ for $n\leq 40$, which gives us explicitly the degrees of all secondary invariants of $K[V\oplus V]^{S_n}$ for $n\leq 40$. Calculation results for small $n$ using the Schur function formula of Lemma \ref{lem-Schur function on q} indeed coincide with the cited list. We record the first three Hilbert series following the calculation:
    \begin{align*}
    &H(K[V\oplus V]^{S_2},t)=\dfrac{1+t^2}{(1-t)^2(1-t^2)^2};\\
    &H(K[V\oplus V]^{S_3},t)=\dfrac{1+t^2+2t^3+t^4+t^6}{(1-t)^2(1-t^2)^2(1-t^3)^2};\\
    &H(K[V\oplus V]^{S_4},t)=\dfrac{1+t^2+2t^3+4t^4+2t^5+4t^6+2t^7+4t^8+2t^9+t^{10}+t^{12}}{(1-t)^2(1-t^2)^2(1-t^3)^2(1-t^4)^2}.
    \end{align*}
    \end{remark}
    Returning to our starting point on product of symmetric groups, we get
    \begin{cor}
    With notations above, we have
    \[H((K[V_\Gamma]\otimes K[V_\Gamma])^{S_\Gamma}, t)=\dfrac{\prod_{i=1}^kf_{n_i}(t)}{\prod_{i=1}^k\phi_{n_i}^2(t)}.\]
    \end{cor}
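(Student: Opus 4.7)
The plan is to reduce the product case to the single-factor case that has already been handled in Theorem \ref{thm-another expression of Hilbert series}, and then simply multiply. The key input is that the Hilbert series of an invariant ring for a direct product of group actions on a direct sum of representations factors as a product of Hilbert series, which is exactly the content of Corollary \ref{cor-Hilbert series decomposition}.

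First I would identify $K[V_\Gamma]\otimes K[V_\Gamma]$ with $K[V_\Gamma\oplus V_\Gamma]$ in the standard way (the coordinate ring of a product of affine spaces is the tensor product of the coordinate rings); since $S_\Gamma$ acts diagonally factor by factor, this identification respects the $S_\Gamma$-action. Then by Corollary \ref{cor-Hilbert series decomposition},
\[
H\bigl((K[V_\Gamma]\otimes K[V_\Gamma])^{S_\Gamma}, t\bigr)
= H\bigl(K[V_\Gamma\oplus V_\Gamma]^{S_\Gamma}, t\bigr)
= \prod_{i=1}^{k} H\bigl(K[V_i\oplus V_i]^{S_{n_i}}, t\bigr).
\]

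Second, I would apply Theorem \ref{thm-another expression of Hilbert series} to each factor on the right, which gives
\[
H\bigl(K[V_i\oplus V_i]^{S_{n_i}}, t\bigr) = \frac{f_{n_i}(t)}{(1-t)^2(1-t^2)^2\cdots(1-t^{n_i})^2} = \frac{f_{n_i}(t)}{\phi_{n_i}^2(t)}.
\]
Multiplying over $i=1,\dots,k$ yields the claimed formula.

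There is essentially no obstacle: the result is a straightforward corollary obtained by combining a multiplicative decomposition already established via Molien's formula with the closed form for each factor derived from the Cauchy-type identity. The only point worth a sentence of care is the identification of tensor product and direct sum at the level of coordinate rings and the compatibility of the $S_\Gamma$-action under this identification, but this is standard and requires no new argument.
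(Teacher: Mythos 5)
Your proposal is correct and follows exactly the route the paper intends: the corollary is obtained by combining the multiplicative decomposition of Corollary \ref{cor-Hilbert series decomposition} with the closed form of Theorem \ref{thm-another expression of Hilbert series} for each factor. The remark on identifying $K[V_\Gamma]\otimes K[V_\Gamma]$ with $K[V_\Gamma\oplus V_\Gamma]$ compatibly with the $S_\Gamma$-action is the only point of care, and the paper treats it the same way.
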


	\medskip
	
\end{document}